\documentclass{amsart}

\usepackage{amssymb}
\usepackage{mathrsfs}
\usepackage{xcolor}

\usepackage[colorlinks=true,
            linkcolor=blue,
            citecolor=blue,
            urlcolor=blue]{hyperref}

\usepackage{graphicx}
\usepackage{enumerate}

\theoremstyle{plain}
\newtheorem{theorem}{Theorem}[section]

\newtheorem{proposition}[theorem]{Proposition}
\newtheorem{lemma}[theorem]{Lemma}

\theoremstyle{definition}

\theoremstyle{remark}
\newtheorem{remark}[theorem]{Remark}

\renewcommand{\P}{\mathbf{P}}
\newcommand{\E}{\mathcal{E}}

\newcommand{\N}{\tilde{N}}

\newcommand{\tr}{\operatorname{tr}}
\newcommand{\dv}{\operatorname{div}}
\newcommand{\Ric}{\operatorname{Ric}}

\title[The Penrose inequality with charge for 2-convex initial data sets]{The Penrose inequality with charge for 2-convex initial data sets}

\author{Tuan Dolmen}
\address{Department of Mathematics, Duke University}
\email{tuan.dolmen@duke.edu}

\date{\today}

\begin{document}

\begin{abstract}
We prove the Penrose inequality with charge under the 2-convexity condition recently introduced by Dong. More precisely, given a complete, connected and asymptotically flat Einstein--Maxwell initial data set $(M,g,k; E,B)$ satisfying the charged dominant energy and the 2-convexity conditions, with divergence-free electromagnetic vector fields $(E,B)$ and a connected outermost past apparent horizon $\Sigma$ that satisfies $|\Sigma| \geq 4\pi q^2$ -- where $q$ is the total charge -- we show that the following inequality for the ADM mass $m$ holds:
\[
m\geq \sqrt{\frac{|\Sigma|}{16\pi}} + q^2 \sqrt{\frac{\pi}{|\Sigma|}},
\]
with equality if and only if $k \equiv 0$ and $(M,g;E,B)$ is isometric to a canonical slice of sub-extremal Reissner--Nordstr\"om spacetime. Building on Dong's proof of the uncharged case, we use his $\mathbf{P}$-inverse mean curvature flow and its weak formulation, which only depends on $(g,\mathbf{P})$ and hence applies to the charged setting unchanged. The novelty of our work is the modification of the monotonicity formula to account for the additional charge term. For time-symmetric data ($k \equiv 0$), the flow reduces to the classical inverse mean curvature flow and our monotonicity formula to Jang's monotonicity of the charged Hawking mass, recovering the charged Riemannian Penrose inequality.
\end{abstract}

\maketitle

\section{Introduction}          
The Penrose inequality originates in Penrose's proposed test of the weak cosmic censorship conjecture \cite{Pen69, Pen73}. An isolated body collapses and forms a trapped surface, so that a singularity develops \cite{Pen65}. Assuming cosmic censorship \cite{Pen69}, the singularity is hidden behind an event horizon, whose area is non-decreasing \cite{Haw71}, and the exterior settles down to a stationary black hole. Together with the fact that gravitational radiation carries positive energy, this heuristic bounds the ADM mass of the initial data from below in terms of the area of the initial horizon \cite{Pen73, JW77}. For a general charged collapse, the final state is Kerr--Newman, a rotating charged black hole. The angular momentum can be carried away by gravitational radiation, but charge is conserved. Discarding the angular momentum in the Kerr--Newman area--mass inequality yields the Reissner--Nordstr\"om bound
\begin{equation}\label{eq:Penrose_with_charge}
    m \ge \sqrt{\frac{A}{16\pi}} + q^2 \sqrt{\frac{\pi}{A}}
\end{equation}
where $q$ denotes the total charge, valid on the range $A \ge 4\pi q^2$, on which the right-hand side is increasing.

In light of Dong's new $\P$-inverse mean curvature flow, we prove the Penrose inequality with charge \eqref{eq:Penrose_with_charge} for asymptotically flat initial data sets satisfying the 2-convexity condition of \cite{Don26a}, with equality precisely for canonical slices of sub-extremal Reissner--Nordstr\"om. To our knowledge, this is the first sharp Penrose inequality with charge beyond time symmetry and spherical symmetry \cite{DK12,KMM25}. To state the result precisely, we first fix our conventions.

\subsection*{Conventions and notation.} 
We follow the conventions of \cite{Don26b,Don26a}, augmented by the
electromagnetic quantities introduced below, so that the results there may
be quoted directly. 

Let $(M^3, g, k; E, B)$ be a smooth, connected, asymptotically flat Einstein--Maxwell initial data set, where $g$ is a Riemannian metric, $k$ is a symmetric $(0,2)$-tensor, and $E, B$ are vector fields on $M$. The mass and momentum densities $\mu,J$ are defined by
the constraint equations
\begin{equation}\label{eq:constraint_eqs}
    16\pi\mu = R + (\tr_g k)^2 - |k|_g^2, \qquad
    8\pi J = \dv_g\big(k-(\tr_g k)g\big),
\end{equation}
and we split off the electromagnetic contributions by setting
\begin{equation}\label{eq:matter_densities}
    \mu_{m} := \mu - \tfrac{1}{8\pi}\big(|E|_g^2+|B|_g^2\big), \qquad
    J_{m} := J - \tfrac{1}{4\pi}\,E\times B,
\end{equation}
where $\times$ denotes the cross product of $(M,g)$ with respect to a fixed orientation, $(E\times B)^\flat = \star_g (E^\flat \wedge B^\flat)$. The quantities $\mu_{m}, J_{m}$ are the energy and momentum densities of the
uncharged matter fields. The charged dominant energy condition is 
\begin{equation}\label{eq:charged_DEC}
    \mu_m \geq |J_m|_g
\end{equation}
\begin{remark}\label{rmk:DEC}
The charged dominant energy condition \eqref{eq:charged_DEC} implies the
usual dominant energy condition $\mu \ge |J|_g$: by
$|E\times B|_g \le |E|_g|B|_g$,
\[
\mu - |J|_g \;\ge\; \big(\mu_m - |J_m|_g\big)
+ \tfrac{1}{8\pi}\big(|E|_g - |B|_g\big)^2 \;\ge\; 0 .
\]
\end{remark}
We further require that the electric and magnetic fields satisfy the
divergence-free condition
\begin{equation}\label{eq:E_B_div_free}
    \dv_g E = \dv_g B = 0,
\end{equation}
which expresses the absence of charged matter in the exterior. Given a
closed two-sided surface $S \subset M$, set
\begin{equation}\label{eq:EM_charge_general}
    q_E(S) := \frac{1}{4\pi}\int_S \langle E, \nu\rangle, \qquad
    q_B(S) := \frac{1}{4\pi}\int_S \langle B, \nu\rangle.
\end{equation}
By \eqref{eq:E_B_div_free} and the divergence theorem, $q_E(S)$ and
$q_B(S)$ depend only on the homology class of $S$. Writing
$\Sigma := \partial M$, we define the electric and magnetic charges by
\begin{equation}\label{eq:EM_charge}
    q_E := q_E(\Sigma), \qquad q_B := q_B(\Sigma),
\end{equation}
and the total squared charge by $q^2 := q_E^2 + q_B^2$.

As in \cite{Don26a}, we define the tensor
\begin{equation}\label{eq:P_def}
    \P:= (\tr_g k)g - k
\end{equation}
If $\kappa_3 \geq \kappa_2 \geq \kappa_1$ are the eigenvalues of $k$ with respect to $g$, then $\kappa_3 + \kappa_2 \geq \kappa_3 + \kappa_1 \geq \kappa_2 + \kappa_1$ are the corresponding eigenvalues of $\P$. Hence, the 2-convexity condition 
\begin{equation}
    \kappa_2 + \kappa_1 \geq 0
\end{equation}
is equivalent to $\P\geq 0$. Moreover, for any two-sided surface $S$ with unit normal $\nu$,
\begin{equation}\label{eq:P_normal}
    \P(\nu,\nu) = \tr_g k - k(\nu,\nu) = \tr_S k ,
\end{equation}
so that $\P \ge 0$ is equivalent to $\tr_S k \ge 0$ for every such $S$.

For a closed two-sided surface $S \subset M$ with unit normal $\nu$, the null expansions are given by $\theta_\pm := H_S \pm \tr_S k$, where $H_S$ is the mean curvature of $S$ in $(M,g)$. We call $S$ a \emph{past apparent horizon} if $\theta_- = 0$, i.e. $H_S = \tr_S k$. By \eqref{eq:P_normal}
 this reads
 \begin{equation}\label{eq:horizon}
    H_S = \P(\nu,\nu).
\end{equation}
Under the $2$-convexity condition $\tr_S k \ge 0$, so past apparent horizons
coincide with the generalized apparent horizons $H_S = |\tr_S k|$ of
\cite{BK11}; in particular the $C^{2,\alpha}$
regularity of the outermost such horizon, as well as its outer minimizing
property, are provided by \cite{Eic10} (see also \cite[\S2]{Don26a}).
Reversing the time orientation, $k \mapsto -k$, interchanges past and future
apparent horizons.

Our main result is the following.
\begin{theorem}\label{thm:main}
    Suppose $(M,g,k;E,B)$ is a complete, connected, asymptotically flat
    initial data set satisfying the charged dominant energy condition
    \eqref{eq:charged_DEC}, with $E,B$ satisfying the divergence-free
    condition \eqref{eq:E_B_div_free}. Assume the $2$-convexity condition
    $\P \ge 0$. If $\Sigma = \partial M$ is a connected, outermost past
    apparent horizon with $|\Sigma| \ge 4\pi q^2$, then
    \begin{equation}\label{eq:main}
        m \geq \sqrt{\frac{|\Sigma|}{16\pi}} + q^2 \sqrt{\frac{\pi}{|\Sigma|}},
    \end{equation}
    where $m$ is the ADM mass. Equality holds if and only if $k \equiv 0$ and
    $(M^3,g;E,B)$ is isometric to a canonical slice of sub-extremal
    Reissner--Nordstr\"om.
\end{theorem}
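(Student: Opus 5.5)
We will follow Dong's strategy from \cite{Don26a} and only change the monotone quantity. Run the weak $\P$-inverse mean curvature flow $\{\Sigma_t\}_{t\ge0}$ starting from $\Sigma$. Its weak level-set formulation depends only on $(g,\P)$, so existence, connectedness of the level sets, the outer-minimizing property, $|\Sigma_t| = e^t|\Sigma|$, and the jump behaviour all transfer verbatim from \cite{Don26a}; we quote them directly. Along this flow we define the \emph{charged $\P$-Hawking mass}
\[
m_{\P,q}(\Sigma_t) := \sqrt{\frac{|\Sigma_t|}{16\pi}}\Big(1 + \frac{4\pi q^2}{|\Sigma_t|} - \frac{1}{16\pi}\int_{\Sigma_t}\big(H^2 - \P(\nu,\nu)^2\big)\Big),
\]
or whatever exact $\P$-correction Dong uses in the uncharged case, with the $q^2$ term added. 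This is exactly Jang's modification of the Hawking mass. At $t=0$ the horizon condition \eqref{eq:horizon} makes the integral term vanish, so $m_{\P,q}(\Sigma_0)$ is precisely the right-hand side of \eqref{eq:main}. As $t\to\infty$, $m_{\P,q}(\Sigma_t)$ is bounded above by $m$ in the limit: the charge term decays like $|\Sigma_t|^{-1/2}$, and the remaining terms are handled by Dong's asymptotic analysis.

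The key step is monotonicity. Consider first the smooth case. Dong's computation, which uses the Gauss equation, the constraints \eqref{eq:constraint_eqs} and Gauss--Bonnet on the connected $\Sigma_t$, gives a formula of the form
\[
\frac{d}{dt} m_{\P}(\Sigma_t) \ge \frac{1}{2}\sqrt{\frac{|\Sigma_t|}{16\pi}}\cdot\frac{1}{16\pi}\int_{\Sigma_t} 16\pi(\mu - |J|) + (\text{nonnegative terms}).
\]
We redo this computation keeping $\mu-|J|$ explicit and substitute $\mu \ge \mu_m + \frac{1}{8\pi}(|E|^2+|B|^2)$ and $|J| \le |J_m| + \frac{1}{4\pi}|E\times B|$. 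We then drop the cross term, as in Remark~\ref{rmk:DEC}, keeping only $\frac{1}{8\pi}(|E|-|B|)^2 \ge$ something useful. Better, we keep $\frac{1}{8\pi}(\langle E,\nu\rangle^2 + \langle B,\nu\rangle^2)$ after controlling the cross product by tangential components: $|E\times B| \le |E^\top||B^\perp| + |E^\perp||B^\top| + |E^\top||B^\top|$, and $2|E\times B|\le |E|^2+|B|^2-(\langle E,\nu\rangle^2+\langle B,\nu\rangle^2)$ up to the needed inequality. This step needs checking. Since $E,B$ are divergence-free, Cauchy--Schwarz gives $\int_{\Sigma_t}(\langle E,\nu\rangle^2+\langle B,\nu\rangle^2) \ge 16\pi^2 q^2/|\Sigma_t|$. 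Using $\frac{d}{dt}|\Sigma_t| = |\Sigma_t|$, this lower bound exactly compensates the derivative of the term $\sqrt{|\Sigma_t|/16\pi}\cdot 4\pi q^2/|\Sigma_t|$, which is $-\frac12$ of itself. Hence $m_{\P,q}$ is nondecreasing. The weak-flow version follows as in \cite{Don26a}, using the Geroch-type integrated inequality in Huisken--Ilmanen's test-function form. At jumps the area is preserved and the outer-minimizing replacement only decreases $\int H^2$, so $m_{\P,q}$ does not drop. The hypothesis $|\Sigma|\ge 4\pi q^2$ is not needed for monotonicity, but it is needed at the end: if Dong's limiting argument bounds the mass along a family whose area may differ, we use the fact that the right-hand side is increasing in area only on this range.

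The main obstacle is precisely the pointwise algebra of the previous step. Dong's formula likely produces $\mu - J(\nu)$ or $\mu + J(\nu)$ paired with $\P$-terms rather than $\mu-|J|$. We must therefore show that $\mu + J(\nu) \ge \mu_m - |J_m| + \frac{1}{8\pi}\big(\langle E,\nu\rangle^2+\langle B,\nu\rangle^2\big)$. The first thing to try is the identity
\[
|E|^2+|B|^2 \pm 2\langle E\times B,\nu\rangle = \langle E,\nu\rangle^2+\langle B,\nu\rangle^2 + |E^\top \mp \nu\times B^\top|^2 ,
\]
whose last term is nonnegative; this gives exactly the needed bound. For rigidity, equality forces every dropped term to vanish along the flow. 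This gives $\P$-terms zero, hence $k\equiv0$ via Dong's argument. It also gives $E,B$ normal with constant normal components and umbilic spheres. The data are then spherically symmetric electrovacuum, and Birkhoff-type uniqueness identifies a sub-extremal Reissner--Nordstr\"om slice, with sub-extremality coming from $|\Sigma|\ge4\pi q^2$ together with the horizon being outermost.
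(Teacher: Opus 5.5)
There is a genuine gap, and it sits exactly where the charge meets the flow. You import $|\Sigma_t| = e^t|\Sigma|$ from the uncharged theory, but this is false for the $\mathbf{P}$-IMCF $\partial_t F = \nu/(H-\mathbf{P}(\nu,\nu))$. There the area satisfies $\frac{d}{dt}|N_t| = |N_t| + I(t)$ with $I(t) = \int_{N_t}\mathbf{P}(\nu,\nu)/(H-\mathbf{P}(\nu,\nu)) \ge 0$, so only $e^{-t}|N_t|$ is nondecreasing. (The correct correction term is $\varphi = 1 - \frac{1}{16\pi}\int_{N_t}(H-\mathbf{P}(\nu,\nu))^2$.)

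Combined with Dong's evolution inequality, your pointwise identity and the flux bound do give $\varphi' \ge -\frac{\varphi}{2} + \frac{2\pi q^2}{|N_t|}$. But this cancels only the part of $\frac{d}{dt}\bigl(q^2\sqrt{\pi/|N_t|}\bigr)$ coming from the $|N_t|$ term of the area growth. The part coming from $I(t)$ survives, and what one actually gets is
\[
\frac{d}{dt}\, m^q_{\mathbf{P}}(N_t) \;\ge\; \frac{I(t)}{8\sqrt{\pi|N_t|}}\Bigl(\varphi(t) - \frac{4\pi q^2}{|N_t|}\Bigr),
\]
which has no sign a priori. So your claim that monotonicity does not need $|\Sigma|\ge 4\pi q^2$ is wrong. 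That hypothesis is precisely $W(0)\ge 0$ for $W := \varphi - 4\pi q^2/|N_t|$, since $\varphi(0)=1$. The missing idea is that $W\ge 0$ propagates: the same inequality for $\varphi$, together with $|N_t|' = |N_t| + I$, gives $\bigl(e^{t/2}W\bigr)' \ge \frac{4\pi q^2 e^{t/2}}{|N_t|}\bigl(1 + \frac{I}{|N_t|}\bigr) \ge 0$.

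In the weak setting this becomes monotonicity of $e^{t/2}W = C - 4\pi q^2 e^{t/2}/|N_t|$, where $C = e^{t/2}\varphi$. It follows from $C(s)-C(r)\ge 2\pi q^2\int_r^s e^{t/2}|N_t|^{-1}\,dt$ and the fact that $e^{t/2}/|N_t|$ is nonincreasing; both hold across jump times. Your closing remark about the right-hand side of the inequality being increasing in the area is not the mechanism.

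The same omission breaks your rigidity argument. Vanishing of the discarded term $\int_{N_t}(H-\mathbf{P}(\nu,\nu))\tr_{N_t}\mathbf{P}$ gives $\tr_{N_t}\mathbf{P}=0$. With $\mathbf{P}\ge 0$ this only yields $\mathbf{P} = \mathbf{P}(\nu,\nu)\,\nu^\flat\otimes\nu^\flat$, not $k\equiv 0$. The missing fact $\mathbf{P}(\nu,\nu)=0$ comes from $I\equiv 0$. For $q\neq 0$, $e^{t/2}W$ is strictly increasing, so $W>0$ for $t>0$. Equality then makes the mass constant along the flow, which forces $I\equiv 0$, i.e.\ $e^{-t/2}|N_t|^{1/2}$ constant. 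A computation that assumes exact exponential area growth cannot detect this.

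Two smaller points:
\begin{itemize}
\item For the weak flow you cannot quote Dong's integrated inequality unchanged. There the term $(\nabla_\nu\mathbf{P})(\nu,\nu)$ is only estimated in absolute value and $\mu-|J|$ is thrown away. You must pass to the limit in that term exactly to retain $\int_{N_t}16\pi(\mu-J(\nu))$.
\item The hypothesis $|\Sigma|\ge 4\pi q^2$ does not give sub-extremality, since it allows $|\Sigma| = 4\pi q^2$. What excludes $m=|q|$ is that the horizon would then lie at infinite distance.
\end{itemize}

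Your identity $|E|^2+|B|^2-2\langle E\times B,\nu\rangle = E_\nu^2+B_\nu^2+|E^T+\nu\times B^T|^2$ is correct. It is a clean equivalent of the paper's pointwise estimate for $\mu-J(\nu)$.
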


\begin{remark}
The right-hand side of \eqref{eq:main} is increasing in $|\Sigma|$ precisely
on the range $|\Sigma| \ge 4\pi q^2$, where it takes values $\ge |q|$. Below
this range the heuristic above predicts only the mass--charge
inequality $m \ge |q|$, which holds in general \cite{GHHP83}; thus
$|\Sigma| \ge 4\pi q^2$ fixes the regime in which \eqref{eq:main} is expected to hold.
\end{remark}

\begin{remark}
    The divergence-free condition \eqref{eq:E_B_div_free} on the exterior corresponds to all charged matter lying behind the horizon; uncharged matter is permitted, i.e. $\mu_m, J_m$ need not vanish. 
\end{remark}

\begin{remark}
For $q = 0$, Theorem \ref{thm:main} reduces to the connected case of \cite[Theorem 1.2]{Don26a};
for $k \equiv 0$, to the charged Riemannian Penrose inequality
\cite{Jan79,HI01}, with rigidity as in \cite{DK12, KWY17}.
\end{remark}

The time-symmetric inequality was first proved by Jang \cite{Jan79} for a connected horizon under the assumption that a smooth solution of the inverse mean curvature flow exists. Furthermore, Jang observed that \eqref{eq:main} is equivalent to the condition that $m - \sqrt{m^2 - q^2} \leq \rho \leq m + \sqrt{m^2 - q^2}$, where $\rho$ is the area radius, $A = 4\pi \rho^2$, thereby showing that the horizon radius needs to stay between the inner and outer horizon radii of Reissner--Nordstr\"om with parameters $(m,q)$. Only the upper bound follows from the established heuristic. For a disconnected horizon, the lower bound may fail: Weinstein--Yamada constructed a time-symmetric counterexample with $\rho < m - \sqrt{m^2 - q^2}$ \cite{WY05}. The missing existence theory for IMCF in Jang's work was later supplied by the work of Huisken and Ilmanen \cite{HI01}, and the rigidity of the time-symmetric case for a connected horizon was proven by Disconzi--Khuri \cite{DK12}. Under the complementary hypothesis $\rho \geq |q|$, the inequality for multiple charged black holes was later proven by Khuri--Weinstein--Yamada \cite{KWY17} using a charged version of Bray's conformal flow \cite{Bra01}.

Beyond time symmetry, much less is known. Under the additional assumption of spherical symmetry, \eqref{eq:main} was established by Hayward \cite{Hay98}, and by Disconzi--Khuri \cite{DK12} via a generalized Jang equation approach; more recently, Kunduri--Margalef-Bentabol--Muth \cite{KMM25} proved it in all dimensions, with a cosmological constant and its rigidity. Furthermore, Disconzi--Khuri \cite{DK12} reduced the general connected case to a coupled Jang--IMCF system; however, the solvability of this system is still open. Without any symmetry assumption, Khuri \cite{Khu13} proved an unconditional Penrose-like inequality with charge for Einstein--Maxwell data, in which the ADM energy is bounded below by an expression proportional to the square root of the area of the outermost future or past apparent horizon plus the square of the total charge, with constants depending on a solution of a linear elliptic equation. However, this inequality is not sharp.

In particular, no sharp form of \eqref{eq:main} appears to be known off time symmetry beyond spherical symmetry.

When $q=0$, the inequality \eqref{eq:Penrose_with_charge} was recently
established by Dong \cite{Don26a} under the $2$-convexity condition, using
the $\sigma$-inverse mean curvature flow introduced in \cite{Don26b}.

Our main tool is the same flow, namely the $\P$-inverse mean curvature flow of
\cite{Don26a},
\begin{equation}\label{eq:flow}
    \partial_t F \;=\; \frac{\nu}{H - \P(\nu,\nu)},
\end{equation}
together with its weak level-set formulation. The key observation of our proof is that the fields $(E,B)$ affect only the estimates. The weak solutions, outward optimizing hulls, jump structure, and horizon regularity theory associated with \eqref{eq:flow} are defined in terms of $(g,\P)$ alone. Consequently, this entire theory of Dong \cite{Don26b,Don26a} applies unchanged in the charged setting, and the new content of this paper is concentrated in a refined monotonicity formula.

For $N_t$ a smooth, closed, connected solution of the $\P$-IMCF, we define the quantities
\begin{equation}\label{eq:varphi_def}
    \varphi(t):= 1-  \frac{1}{16\pi}\int_{N_t}(H - \P(\nu,\nu))^2
\end{equation}
\begin{equation}\label{eq:q_mass_def}
    m_{\P}^q(N_t):= \sqrt{\frac{|N_t|}{16\pi}} \varphi(t) + q^2 \sqrt{\frac{\pi}{|N_t|}}
\end{equation}
Note that $\varphi = 1$ on the horizon, so $m_{\P}^q(\Sigma)$ is  exactly the Reissner--Nordstr\"om value. 

In \cite{Don26a}, the dominant energy condition enters only through the estimates, where certain terms are bounded below by $\mu - |J|_g$. In the charged setting, however, this discards precisely the electromagnetic contribution. After decomposing the electric and magnetic fields along the flow surfaces, we show that the energy of their normal components survives every estimate. Furthermore, since the enclosed charge is conserved along the flow by \eqref{eq:E_B_div_free}, this energy contributes a definite term: $\varphi(t)$ satisfies
\begin{equation}\label{eq:varphi_deriv}
    \varphi'(t) \;\ge\; -\frac{\varphi(t)}{2} \;+\; \frac{2\pi q^2}{|N_t|}.
\end{equation}

Unlike the classical inverse mean curvature flow (IMCF), for which $|N_t|' = |N_t|$ exactly, the area along the $\P$-IMCF satisfies $|N_t|' = |N_t| + I(t)$, with an extra term $I(t) = \int_{N_t}\P(\nu,\nu)/(H-\P(\nu,\nu)) \ge 0$, which accelerates the decay of the charge term. Differentiating $m^q_{\P}$ and inserting \eqref{eq:varphi_deriv}, the classical cancellations leave
\begin{equation}\label{eq:mono}
    \frac{d}{dt}\,m_{\P}^q(N_t) \;\ge\;
    \frac{I(t)W(t)}{8\sqrt{\pi|N_t|}}, \qquad
    W(t) := \varphi(t) - \frac{4\pi q^2}{|N_t|},
\end{equation} 
so monotonicity of $m^q_{\P}$ is governed by the single scalar function $W$. Note that $W(0) = 1 - 4\pi q^2/|\Sigma|$, so $W(0) \ge 0$ is precisely the assumption $|\Sigma| \ge 4\pi q^2$. Then, by \eqref{eq:varphi_deriv} again, we show that $\big(e^{t/2}W\big)' \ge 0$, so nonnegativity of $W$ propagates from the initial time. The estimates behind \eqref{eq:mono} are integral in form and hold across jump times, so $m^q_{\P}$ is monotone even in the presence of jumps.

Since $|N_t| \ge e^t|\Sigma| \to \infty$, the charge term vanishes in the
limit and $m^q_{\P}(N_t)$ reduces to Dong's $m_{\P}(N_t)$,
whose limit is bounded by $m$ via the asymptotic comparison of
\cite{Don26b}. Combined with the monotonicity and the exact horizon value
$m^q_{\P}(\Sigma)$, this yields \eqref{eq:main}. In the case of
equality with $q \neq 0$, $W$ is strictly positive for
$t > 0$, forcing $I \equiv 0$, $\P \equiv 0$, and $k \equiv 0$; the flow then consists of
round spheres, from which we reconstruct the exterior region of a time-symmetric slice of sub-extremal Reissner--Nordstr\"om directly.

\subsection*{Outline of the paper.}
Section~\ref{sec:prelim} collects the asymptotic conditions, the weak
formulation of the $\P$-inverse mean curvature flow, and the two
electromagnetic estimates on which everything rests,
Lemmas~\ref{lem:EM_pointwise} and~\ref{lem:flux}.
Section~\ref{sec:smooth} carries out the argument for a smooth, closed,
connected solution of the flow, where it is free of analytic difficulties.
Section~\ref{sec:weak} extends the argument to the weak flow:
Section~\ref{sec:weak-growth} proves the weak growth formula,
Theorem~\ref{thm:growth}, an analogue of \cite[Thm.~6.5]{Don26b} in which
the energy term $\int_{N_t} 16\pi\bigl(\mu - J(\nu)\bigr)$ survives, and
Section~\ref{sec:weak-mono} deduces the monotonicity of
$m^{q}_{\P}$, concluding with Theorem~\ref{thm:weak_monotonicity}.
Section~\ref{sec:proof} proves Theorem~\ref{thm:main} and treats the case of
equality.

\subsection*{Acknowledgment.} The author would like to thank Conghan Dong and Hubert Bray for their helpful discussions.

\section{Preliminaries}\label{sec:prelim}
\subsection*{Asymptotics, mass, and charge.} We adopt the asymptotic decay conditions for $(g,k)$ and the resulting definition of the ADM mass $m$ verbatim from \cite{Don26a}. Note that there is no pointwise decay condition imposed on the fields $(E,B)$: the charge $q$ is defined by the flux \eqref{eq:EM_charge} through $\Sigma$ and, by \eqref{eq:E_B_div_free}, agrees with the flux through any surface enclosing $\Sigma$, so the behavior of the fields at infinity plays no role in the argument.

\subsection*{The flow and its weak solutions.}
Since $\P$ is smooth with a uniform $C^{1}$ bound, it is admissible in
the sense of \cite[\S4.1]{Don26b} with $\sigma_{i} \equiv \P$, and no
smoothing is required; the weak level-set formulation of \eqref{eq:flow}
developed in \cite{Don26b}, following \cite{HI01}, therefore applies verbatim.
For a precompact open set $\E_{0} \subset M$ with $C^{2}$ boundary, recall
from \cite[\S4]{Don26b} the notion of a weak solution of \eqref{eq:flow}
with initial condition $\E_{0}$: in particular, $u$ is a locally Lipschitz
function with $\E_{0} = \{u < 0\}$, and we write
\[
  \E_{t} := \{u < t\}, \qquad N_{t} := \partial \E_{t}, \qquad t \geq 0 .
\]
Existence follows from \cite[Thm.~4.9]{Don26b}; the construction, by elliptic
regularization, is recalled in Section~\ref{sec:weak-growth}. Here, and in the
horizon regularity theory of \cite{Eic10}, $M$ is complete without boundary;
when $\Sigma = \partial M$ as in Theorem~\ref{thm:main}, we extend $M$ across
$\Sigma$ and take $\E_{0}$ to be the region $\Sigma$ encloses in the
extension. The choice of extension is immaterial, since the flow remains in
$M$.

We shall use the following properties of the weak flow:
\begin{enumerate}[I)]
  \item $H - \P(\nu,\nu) > 0$ for a.e.\ $t > 0$, $\mathcal H^2$-a.e.\ on $N_t$ \cite[Lem.~4.10]{Don26b}
  \item The area grows at least exponentially:
        $e^{-r}|N_{r}| \leq e^{-s}|N_{s}|$ for all $0 \leq r < s$ \cite[(48)]{Don26b}.
  \item If $\Sigma$ is connected and outermost, then each $N_{t}$ is
        connected, so $\chi(N_{t}) \leq 2$ \cite[Lem.~2.8]{Don26a}.
  \item At a jump time, $N_{t}$ is replaced by the boundary $N_{t}^{+}$ of
        the outward optimizing hull $\E_{t}^{+} = \{u \leq t\}$, and the new portion
        $N_{t}^{+} \setminus N_{t}$ satisfies $H = \P(\nu,\nu)$ \cite[(16)]{Don26a}.
  \item If $\E_{0}$ is bounded by a connected component of the outermost
        past apparent horizon, then
        $\E_{0} = \E_{0}^{+} = \E_{0}'$, where $\E_{0}'$ is
        the strictly minimizing hull of $\E_{0}$ \cite[\S1]{HI01}; the estimates below then extend to $t = 0$ by the smoothing argument of \cite[Rmk.~3.2]{Don26a} and \cite[\S6]{Don26b}.
\end{enumerate}
Since the new boundary $N_{t}^{+}$ in IV) is homologous to $\Sigma$, the
enclosed charge is unchanged across a jump by \eqref{eq:E_B_div_free}.

\subsection*{Electromagnetic estimates.} In order to modify the monotonicity formulae given in \cite{Don26a}, we first split the electric and magnetic fields into their normal and tangent parts along a two-sided surface $S\subset M$ with unit normal $\nu$:
\begin{align*}
    &E = E_\nu \nu + E^T
    \\
    &B = B_\nu\nu + B^T
\end{align*}
Then we can write $\langle E \times B, \nu\rangle_g = \langle E^T \times B^T, \nu\rangle_g$ and hence
\[
\left|\left\langle \frac{1}{4\pi}E\times B, \nu\right\rangle_g\right| \leq \frac{1}{4\pi}|E^T|_g|B^T|_g
\]
We now estimate $\mu - J(\nu)$ which will be critical in our monotonicity computations:

\begin{lemma}\label{lem:EM_pointwise}
Let $S \subset M$ be a two-sided surface with unit normal $\nu$. Then,
pointwise on $S$,
\begin{equation}\label{eq:EM_pointwise}
    \mu - J(\nu) \;\ge\; \frac{1}{8\pi}\big(E_\nu^2 + B_\nu^2\big),
\end{equation}
with equality if and only if $\mu_m = |J_m|_g$ with $J_m(\nu) = |J_m|_g$,
and $|E^T|_g = |B^T|_g$ with
$\langle E^T \times B^T, \nu\rangle_g = |E^T|_g\,|B^T|_g$.
\end{lemma}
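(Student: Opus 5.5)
The proof is a direct pointwise decomposition using the definitions \eqref{eq:matter_densities}. I will write
\[
\mu - J(\nu) = \big(\mu_m - J_m(\nu)\big) + \tfrac{1}{8\pi}\big(|E|_g^2+|B|_g^2\big) - \tfrac{1}{4\pi}\langle E\times B,\nu\rangle_g,
\]
and estimate the two brackets separately. For the matter part, Cauchy--Schwarz gives $J_m(\nu)\le |J_m|_g$, so the charged dominant energy condition \eqref{eq:charged_DEC} yields $\mu_m - J_m(\nu) \ge \mu_m - |J_m|_g \ge 0$. Equality in this chain holds exactly when $\mu_m = |J_m|_g$ and $J_m(\nu) = |J_m|_g$.

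For the electromagnetic part, I use the orthogonal splitting $E = E_\nu\nu + E^T$, which gives $|E|_g^2 = E_\nu^2 + |E^T|_g^2$, and similarly for $B$. I also use the identity noted just before the lemma, $\langle E\times B,\nu\rangle_g = \langle E^T\times B^T,\nu\rangle_g$. To justify it, expand the cross product. The terms $E_\nu B_\nu\,\nu\times\nu$ vanish, and the mixed terms $E_\nu\,\nu\times B^T$ and $B_\nu\,E^T\times\nu$ are orthogonal to $\nu$. Hence
\[
\tfrac{1}{8\pi}(|E|^2+|B|^2) - \tfrac{1}{4\pi}\langle E\times B,\nu\rangle = \tfrac{1}{8\pi}(E_\nu^2+B_\nu^2) + \tfrac{1}{8\pi}\Big(|E^T|^2+|B^T|^2 - 2\langle E^T\times B^T,\nu\rangle\Big).
\]
Next, $|\langle E^T\times B^T,\nu\rangle| \le |E^T\times B^T| \le |E^T||B^T|$. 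The last bracket is therefore at least $|E^T|^2+|B^T|^2-2|E^T||B^T| = (|E^T|-|B^T|)^2 \ge 0$. This proves \eqref{eq:EM_pointwise}.

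For the equality case, note that all the inequalities used are chained and each slack term is nonnegative. Equality in \eqref{eq:EM_pointwise} therefore holds if and only if each slack vanishes. The matter slack vanishes exactly under the conditions already identified. The electromagnetic slack vanishes if and only if $|E^T| = |B^T|$ and $\langle E^T\times B^T,\nu\rangle = |E^T||B^T|$. Conversely, these conditions make the bracket equal to $(|E^T|-|B^T|)^2 = 0$, which gives the stated characterization. No step here is a real obstacle. The only point needing care is checking that the normal–tangential cross terms drop out of $\langle E\times B,\nu\rangle$, which follows from the antisymmetry of the cross product.
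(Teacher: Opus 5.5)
Your proof is correct and follows the paper's argument essentially step for step. You use the same splitting of $\mu - J(\nu)$ into the uncharged-matter part and the electromagnetic part, the same bound $\langle E^T\times B^T,\nu\rangle_g \le |E^T|_g|B^T|_g$, the same completion of the square to $(|E^T|_g-|B^T|_g)^2$, and the same reading of the equality case from the vanishing of each nonnegative slack; your one addition is an explicit justification of $\langle E\times B,\nu\rangle_g = \langle E^T\times B^T,\nu\rangle_g$, which the paper asserts without proof.
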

\begin{proof}
    Using the decomposition above, we compute
    \begin{align*}
    \mu - J(\nu) &= \bigg(\mu_{m} - J_m(\nu)\bigg) + \frac{1}{8\pi}\bigg[|E|_g^2 + |B|_g^2 - 2\langle E \times B, \nu\rangle_g\bigg]
    \\
    &\geq \bigg(\mu_{m} - J_m(\nu)\bigg) + \frac{1}{8\pi}\bigg[|E|_g^2 + |B|_g^2 - 2|E^T|_g |B^T|_g\bigg]
    \\
    &\geq \bigg(\mu_{m} - |J_m|_g\bigg) + \frac{1}{8\pi}\bigg[|E|_g^2 + |B|_g^2 - 2|E^T|_g |B^T|_g\bigg]
    \\
    &= \underbrace{ \bigg(\mu_{m} - |J_m|_g\bigg)}_{\geq 0 \text{ by charged DEC}} + \frac{1}{8\pi}\bigg[E_\nu^2 + B_\nu^2 + \underbrace{(|E^T|_g - |B^T|_g)^2}_{\geq 0}\bigg]
    \\
    &\geq \frac{1}{8\pi}(E_\nu^2 + B_\nu^2)
\end{align*}
Equality in \eqref{eq:EM_pointwise} forces equality at each of the three
inequalities above. The first gives
$\langle E^T \times B^T, \nu\rangle_g = |E^T|_g |B^T|_g$, the second
$J_m(\nu) = |J_m|_g$, and the third the vanishing of both
$\mu_m - |J_m|_g$ and $(|E^T|_g - |B^T|_g)^2$. Conversely, these conditions
give equality throughout.
\end{proof}

Integrating over a closed surface, the right-hand side of
\eqref{eq:EM_pointwise} is controlled from below by the charge:

\begin{lemma}\label{lem:flux}
Let $S \subset M$ be a closed surface homologous to $\Sigma$. Then
\begin{equation}\label{eq:flux_bound}
    \int_S \big(E_\nu^2 + B_\nu^2\big)
    \;\ge\; \frac{(4\pi q_E)^2 + (4\pi q_B)^2}{|S|}
    \;=\; \frac{16\pi^2 q^2}{|S|},
\end{equation}
with equality if and only if $E_\nu$ and $B_\nu$ are constant on $S$.
\end{lemma}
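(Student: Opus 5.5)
The plan is to combine charge conservation with Cauchy--Schwarz. First I will check that the flux through $S$ equals the flux through $\Sigma$. Since $S$ is homologous to $\Sigma$, there is a region $\Omega$ with $\partial\Omega = S \cup (-\Sigma)$, orientations taken with respect to the outward normals used in \eqref{eq:EM_charge_general}. The divergence theorem with \eqref{eq:E_B_div_free} then gives
\[
\int_S E_\nu = \int_\Sigma \langle E,\nu\rangle = 4\pi q_E, \qquad \int_S B_\nu = 4\pi q_B .
\]
This is exactly the homology invariance already noted after \eqref{eq:EM_charge_general}, so I will simply quote it.

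Next I apply Cauchy--Schwarz on $S$ to each component separately:
\[
(4\pi q_E)^2 = \Big(\int_S E_\nu \cdot 1\Big)^2 \le |S|\int_S E_\nu^2,
\]
and the same for $B_\nu$. Adding the two inequalities and dividing by $|S|>0$ yields \eqref{eq:flux_bound}. The identity $(4\pi q_E)^2+(4\pi q_B)^2 = 16\pi^2 q^2$ is just the definition $q^2 = q_E^2+q_B^2$.

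For the equality case, note that the sum of the two inequalities is an equality if and only if each one is, because each is individually an inequality in the same direction. Equality in Cauchy--Schwarz against the constant function $1$ holds exactly when $E_\nu$ (respectively $B_\nu$) is $\mathcal H^2$-a.e.\ equal to a constant. On a smooth surface, continuity upgrades this to "constant on $S$". Conversely, if both normal components are constant, both sides obviously agree.

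I expect no real obstacle here. The only care needed is with orientation and sign conventions in the homology step: $\nu$ on $S$ should point toward infinity, consistent with $\Sigma$. Even this does not matter, since only squares of the fluxes enter. For the weak-flow surfaces used later, which may be only $C^{1,\alpha}$ and homologous to $\Sigma$, the same divergence-theorem argument applies on the region between them.
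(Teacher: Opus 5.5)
Your proof is correct and follows the paper's argument: charge conservation via the divergence theorem and \eqref{eq:E_B_div_free} gives $\int_S E_\nu = 4\pi q_E$ and $\int_S B_\nu = 4\pi q_B$, and then Cauchy--Schwarz against the constant function $1$ is applied to $E_\nu$ and $B_\nu$ and the two inequalities are summed. Your treatment of the equality case fills in detail the paper leaves implicit: each summand must be sharp, Cauchy--Schwarz is sharp exactly when the function is a.e.\ constant, and continuity of $E_\nu$ and $B_\nu$ upgrades this to constancy on $S$.
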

\begin{proof}
By \eqref{eq:E_B_div_free} and the divergence theorem,
$\int_S E_\nu = 4\pi q_E$ and $\int_S B_\nu = 4\pi q_B$; the claim follows
from the Cauchy--Schwarz inequality
$\big(\int_S E_\nu\big)^2 \le |S| \int_S E_\nu^2$.
\end{proof}

\section{Monotonicity along the smooth flow}\label{sec:smooth}
\subsection{Growth formulae}
For a smooth, closed solution of \eqref{eq:flow},
\begin{equation}\label{eq:area_growth}
\frac{d}{dt}|N_t| = |N_t| + I(t), \qquad
I(t) := \int_{N_t}\frac{\P(\nu,\nu)}{H-\P(\nu,\nu)} \;\ge\; 0,
\end{equation}
so that $e^{-t}|N_t|$ is nondecreasing \cite[\S3]{Don26a}

Using \eqref{eq:varphi_def}, we define\footnote{Note that we redefined $B(t)$ in \cite{Don26a} as $C(t)$ due to notation collision with the magnetic field $B$.}
\begin{equation}
    C(t) := e^{t/2}\varphi(t)
\end{equation}
Then for this quantity we have the following evolution estimate:
\begin{proposition}\label{prop:residue}
Let $(N_t)$ be a smooth, closed, connected solution of \eqref{eq:flow}. Then
\begin{equation}\label{eq:residue}
\begin{split}
16\pi e^{-t/2}C'(t) \;\ge\;
\int_{N_t} 2\Big|\tfrac{\nabla^N(H-\P(\nu,\nu))}{H-\P(\nu,\nu)}
&+ \P^{TN}(\nu,\cdot)\Big|^2 + \big|\mathring{II}+\mathring{\P}^{TN}\big|^2
\\
&\;+\; \int_{N_t}\Big[16\pi\big(\mu - J(\nu)\big)
+ \big(H-\P(\nu,\nu)\big)\,\mathrm{tr}_{N_t}\P\Big]
\end{split}
\end{equation}
where $\P^{TN}(\nu, \cdot)$ is the restriction of $\P(\nu,\cdot)$ to the tangent plane of $N_t$, $\mathring{\P}^{TN} = \P^{TN} - \frac{1}{2}\tr_{N_t}\P^{TN} g_{N_t}$ is the trace-free part, and $\mathring{II} = II - \frac{1}{2}H g_{N_t}$.
\end{proposition}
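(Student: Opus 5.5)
This is the smooth-flow evolution estimate of \cite[\S3]{Don26a}, stopped one step earlier: we keep the full term $16\pi(\mu - J(\nu))$ instead of bounding it below by $16\pi(\mu-|J|_g)\ge 0$. We follow Dong's computation verbatim.

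Write $F := H-\P(\nu,\nu)>0$, so the speed is $1/F$. Along $\partial_t F_t = \nu/F$ the area element evolves by $\partial_t\, d\mu = (H/F)\, d\mu$. The mean curvature evolves by the standard first-variation formula
\[
\partial_t H = -\Delta(1/F) - \big(|II|^2+\Ric(\nu,\nu)\big)/F .
\]
Differentiating $\P(\nu,\nu)$ uses $\partial_t\nu = -\nabla^N(1/F)$, which gives
\[
\partial_t\big(\P(\nu,\nu)\big) = (\nabla_\nu \P)(\nu,\nu)/F - 2\P(\nu,\nabla^N(1/F)).
\]
Substituting these into $\tfrac{d}{dt}\int_{N_t}F^2$ and using $\varphi = 1-\frac{1}{16\pi}\int F^2$, we get
\[
16\pi e^{-t/2}C'(t) = 8\pi - \tfrac12\cdot 16\pi\,\varphi\cdot\ldots - \tfrac{d}{dt}\int F^2 .
\]
The right-hand side therefore equals $8\pi + \tfrac12\int F^2 - \tfrac{d}{dt}\int F^2$, after writing $8\pi(1-\varphi)$ appropriately. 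The $\Delta$ term is integrated by parts to give $2\int |\nabla^N F|^2/F^2$.

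Next, rewrite $\Ric(\nu,\nu)$ through the traced Gauss equation,
\[
2\Ric(\nu,\nu) = R - R_N + H^2 - |II|^2 .
\]
Then substitute the constraint equations \eqref{eq:constraint_eqs}, which replace $R$ by $16\pi\mu - (\tr k)^2+|k|^2$. The term $(\nabla_\nu\P)(\nu,\nu)$ is combined with a tangential divergence to produce $\dv\P(\nu)$, which is $-8\pi J(\nu)$ by \eqref{eq:constraint_eqs}, since $\dv_g(k - (\tr k) g) = -\dv_g\P$. This is where $16\pi(\mu - J(\nu))$ appears. Gauss--Bonnet together with $\chi(N_t)\le 2$ (property III) absorbs the $8\pi$ through $\int R_N = 4\pi\chi(N_t)\le 8\pi$.

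The remaining quadratic terms in $II$, $\nabla^N F$ and the components of $\P$ are completed to squares. The cross terms $2\P(\nu,\nabla^N F)/F$ join with $|\nabla^N F|^2/F^2$ to form
\[
2\big|\nabla^N F/F + \P^{TN}(\nu,\cdot)\big|^2 ,
\]
and the $\langle II,\P^{TN}\rangle$ terms give $|\mathring{II}+\mathring\P^{TN}|^2$. The leftover $k$-quadratic terms reduce to $F\,\tr_{N_t}\P$ plus a nonnegative remainder, possibly only after discarding $|\P^{TN}(\nu,\cdot)|^2$-type terms of good sign. We do not expand all of this here: every manipulation is identical to \cite{Don26a}, since $(E,B)$ enter only through $\mu$ and $J$. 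Those quantities appear linearly and are simply left unbounded in \eqref{eq:residue}.

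The main obstacle is the bookkeeping of the $k$-quadratic terms: after substituting $\P = (\tr k)g - k$, one must check that $(\tr k)^2-|k|^2$, the $\P^{TN}$ squares and the $F\cdot\P$ cross terms combine exactly into the displayed squares plus $F\,\tr_{N_t}\P$, with a nonnegative remainder. To control this, we first carry out the computation in an orthonormal frame $\{\nu,e_1,e_2\}$, writing $\P$ in blocks (normal--normal, normal--tangent, tangent--tangent). We then verify the algebraic identity for $(\tr k)^2-|k|^2$ in terms of these blocks, namely
\[
(\tr k)^2-|k|^2 = 2\P(\nu,\nu)\tr_{N}\P - 2|\P^{TN}(\nu,\cdot)|^2 + \ldots,
\]
before assembling the squares.
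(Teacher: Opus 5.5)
Your outline follows the paper's route: Dong's smooth computation from \cite[\S3]{Don26a}, stopped before $16\pi(\mu-J(\nu))$ is bounded by $16\pi(\mu-|J|_g)$. The paper quotes Dong's resulting inequality instead of re-deriving it, and that is the only structural difference. As written, however, the proposal does not prove the statement, for two concrete reasons.

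The first reason is a sign error in the reduction of $C'$. Since $C'=e^{t/2}(\varphi'+\tfrac12\varphi)$ and $8\pi\varphi=8\pi-\tfrac12\int_{N_t}F^2$, one has
\[
16\pi e^{-t/2}C'(t)=8\pi-\tfrac12\int_{N_t}F^2-\tfrac{d}{dt}\int_{N_t}F^2 ,
\]
not $8\pi+\tfrac12\int F^2-\tfrac{d}{dt}\int F^2$. This is not cosmetic. With the correct sign, the terms $-\tfrac12F^2$, $\tfrac32H^2$ (from Gauss), $-HF$ (from the area element) and $-2H\P(\nu,\nu)$ (from the divergence identity for $(\nabla_\nu\P)(\nu,\nu)$) combine to exactly $-\tfrac12\P(\nu,\nu)^2$. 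With your sign, an extra $+\int F^2$ survives. Your identity already fails on the coordinate spheres of a Schwarzschild slice: there $k=0$, the flow is IMCF, the Hawking mass is constant and $C'\equiv 0$, yet your right-hand side equals $\int_{N_t}H^2>0$ for $t>0$.

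The second reason is that the bookkeeping you defer is the entire content of the proposition, and the hint you give for it is wrong. After the two squares are completed and Gauss--Bonnet is applied, the leftover integrand is
\[
R-\tfrac12\P(\nu,\nu)^2-H\P(\nu,\nu)-2|\P^{TN}(\nu,\cdot)|^2+2(\dv_g\P)(\nu)+H\tr_g\P-|\mathring{\P}^{TN}|^2 .
\]
The paper shows this equals $16\pi(\mu-J(\nu))+(H-\P(\nu,\nu))\tr_{N_t}\P$ \emph{exactly}, using four facts:
\[
|\P|^2=|\mathring{\P}^{TN}|^2+\tfrac12(\tr_{N_t}\P)^2+2|\P^{TN}(\nu,\cdot)|^2+\P(\nu,\nu)^2,\qquad 16\pi\mu=R-|\P|^2+\tfrac12(\tr_g\P)^2,
\]
together with $8\pi J=-\dv_g\P$ and $\tr_g\P=\tr_{N_t}\P+\P(\nu,\nu)$. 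There is no nonnegative remainder, and nothing may be discarded. The terms $-2|\P^{TN}(\nu,\cdot)|^2$ and $-|\mathring{\P}^{TN}|^2$ produced by completing the squares have the \emph{bad} sign, so they cannot be dropped; they must cancel against the same terms hidden in $R$. Equivalently, the correct block identity is
\[
(\tr_g k)^2-|k|_g^2=\P(\nu,\nu)\tr_{N_t}\P-\tfrac12\P(\nu,\nu)^2-|\mathring{\P}^{TN}|^2-2|\P^{TN}(\nu,\cdot)|^2 .
\]
The coefficient of $\P(\nu,\nu)\tr_{N_t}\P$ is $1$, not $2$. With your coefficient, the cross terms would assemble into $(H-2\P(\nu,\nu))\tr_{N_t}\P$, which is weaker than the stated bound under $\P\ge0$.

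The only genuine inequality in the whole computation is $4\pi(2-\chi(N_t))\ge 0$ from Gauss--Bonnet. Once you fix the sign and insert the identity above, your derivation does yield the proposition.
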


\begin{proof} 
    We start with the inequality 
    \begin{equation}\label{ineq:Dong}
    \begin{split}
        &16\pi e^{-t/2}C'(t) 
        \\
        &\geq \int_{N_t} 2\left(\frac{\nabla^N (H - \P(\nu,\nu))}{H - \P(\nu,\nu)} + \P^{TN}(\nu,
        \cdot)\right)^2 + |\mathring{II} + \mathring{\P}^{TN}|^2
        \\
        &+ \int_{N_t}\bigg(R - \frac{1}{2}(\P(\nu,\nu))^2 - H \P(\nu,\nu) - 2|\P^{TN}(\nu,\cdot)|^2 + 2(\dv_g \P)(\nu) + H\tr_g \P - |\mathring{\P}^{TN}|^2\bigg)
    \end{split}
    \end{equation}
    as given in \cite[\S3]{Don26a}. We then compute
    \begin{align*}
        &R - \frac{1}{2}(\P(\nu,\nu))^2 - H \P(\nu,\nu) - 2|\P^{TN}(\nu,\cdot)|^2 + 2(\dv_g \P)(\nu) + H\tr_g \P - |\mathring{\P}^{TN}|^2
        \\
        &= R - |\P|_g^2 + \frac{1}{2}(\P(\nu,\nu))^2 - H \P(\nu,\nu) + 2(\dv_g \P)(\nu) + H\tr_g \P + \frac{1}{2}(\tr_{N_t}\P^{TN})^2
        \\
        &= 16\pi(\mu - J(\nu)) + \frac{1}{2}\bigg((\P(\nu,\nu))^2 - (\tr_g \P)^2 + (\tr_{N_t}\P^{TN})^2\bigg) - H \P(\nu,\nu) + H \tr_g \P 
        \\
        &= 16\pi(\mu - J(\nu)) + (\P(\nu,\nu))^2 - \P(\nu,\nu)(\tr_g \P) + H (\tr_g \P - \P(\nu,\nu))
        \\
        &= 16\pi (\mu - J(\nu)) + (H - \P(\nu,\nu))\tr_{N_t}\P
\end{align*}
where the first equality follows from $|\P|^2_g = |\mathring{\P}^{TN}|^2_g + \frac{1}{2}(\tr_{N_t} \P^{TN})^2 + 2|\P^{TN}(\nu,\cdot)|^2_g + |\P(\nu,\nu)|^2$, the second from $16\pi \mu = R - |\P|_g^2 + \frac{1}{2}(\tr_g \P)^2$ and $8\pi J = -\dv_g\P$, the third from cancellations, and the last from $\tr_{N_t}\P = \tr_{g}\P - \P(\nu,\nu)$. Plugging the last expression back into \eqref{ineq:Dong} yields the desired inequality.
\end{proof}
When $\E_0$ is bounded by a connected component of the outermost past
apparent horizon, the monotonicity statements of this section extend to
$t = 0$ by V).

\subsection{Monotonicity of $m_{\P}^q$}
Using Lemmas \ref{lem:EM_pointwise} and \ref{lem:flux}, and Proposition \ref{prop:residue}, we obtain
\begin{align*}
    16\pi e^{-\frac{t}{2}}C'(t) &\geq 
    \int_{N_t} 2\Big|\tfrac{\nabla^N(H-\P(\nu,\nu))}{H-\P(\nu,\nu)}
+ \P^{TN}(\nu,\cdot)\Big|^2 + \big|\mathring{II}+\mathring{\P}^{TN}\big|^2
\\
&\quad \quad \quad \quad \quad   \;+\; \int_{N_t}\Big[16\pi\big(\mu - J(\nu)\big)
+ \big(H-\P(\nu,\nu)\big)\,\mathrm{tr}_{N_t}\P\Big]
\\
&\geq 16\pi \int_{N_t} (\mu - J(\nu))
\\
&\geq 2\int_{N_t} (E_\nu^2 + B_\nu^2)
\\
&\geq \frac{32 \pi^2 q^2}{|N_t|}
\end{align*}
where the second inequality follows from discarding all of the non-negative terms. Note that $H- \P(\nu,\nu) > 0$ and $\tr_{N_t}\P \geq 0$. By definition of $C(t)$, this yields
\begin{equation}\label{ineq:varphi_dot}
    \varphi'(t) \geq -\frac{\varphi(t)}{2} + \frac{2\pi q^2}{|N_t|}
\end{equation}
We now define the function
\begin{equation}
    W(t) := \varphi(t) - \frac{4\pi q^2}{|N_t|}
\end{equation}
Recall the definition from \eqref{eq:q_mass_def}. Then we compute the following:
\begin{align*}
    \frac{d}{dt}m_{\P}^q(N_t) &= \frac{d}{dt}\bigg(\sqrt{\frac{|N_t|}{16\pi}}\varphi(t) + q^2 \sqrt{\frac{\pi}{|N_t|}}\bigg)
    \\
    &= \frac{\varphi(t)}{2}\sqrt{\frac{|N_t|}{16\pi}} + \frac{\varphi(t) I(t)}{2\sqrt{16\pi |N_t|}} + \sqrt{\frac{|N_t|}{16\pi}} \varphi'(t) - \frac{q^2}{2}\sqrt{\frac{\pi}{|N_t|}} - \frac{q^2 \sqrt{\pi} I(t)}{2|N_t|^{3/2}}
    \\
    &\geq \frac{\varphi(t)}{2}\sqrt{\frac{|N_t|}{16\pi}} + \frac{\varphi(t) I(t)}{2\sqrt{16\pi |N_t|}} - \frac{\varphi(t)}{2}\sqrt{\frac{|N_t|}{16\pi}} + \frac{2\pi q^2}{|N_t|}\sqrt{\frac{|N_t|}{16\pi}} - \frac{q^2}{2}\sqrt{\frac{\pi}{|N_t|}} - \frac{q^2 \sqrt{\pi} I(t)}{2|N_t|^{3/2}}
    \\
    &= \frac{I(t)}{8\sqrt{\pi |N_t|}}\left[\varphi(t) - \frac{4\pi q^2}{|N_t|}\right]
    \\
    &= \frac{I(t)W(t)}{8\sqrt{\pi |N_t|}}
\end{align*}
where the third line follows from \eqref{ineq:varphi_dot}. We have shown
\begin{equation}
    \frac{d}{dt}m_{\P}^q(N_t) \geq  \frac{I(t)W(t)}{8\sqrt{\pi |N_t|}}
\end{equation}
Notice that $I(t) \geq 0$, hence it suffices to show that $W(t)\geq 0$ for all $t\geq 0$ to conclude that $m_{\P}^q$ is monotone non-decreasing:
\begin{align*}
    \frac{d}{dt}\bigg(e^{\frac{t}{2}}W(t)\bigg) &= e^{\frac{t}{2}}\bigg[\frac{1}{2}W(t) +  \frac{d}{dt}\bigg(\varphi(t) - \frac{4\pi q^2}{|N_t|}\bigg)\bigg]
    \\
    &= e^{\frac{t}{2}}\bigg[\frac{1}{2} \varphi(t) - \frac{2\pi q^2}{|N_t|} + \varphi'(t)  + \frac{4\pi q^2 I(t)}{|N_t|^2}+ \frac{4\pi q^2}{|N_t|}\bigg]
    \\
    &\geq \frac{e^{\frac{t}{2}} 4\pi q^2}{|N_t|}\bigg(1 + \frac{I(t)}{|N_t|}\bigg) \geq 0
\end{align*}
where the third line follows from \eqref{ineq:varphi_dot}. Also $\varphi(0) =1$, which implies $W(0) = 1 - \frac{4\pi q^2}{|\Sigma|}$. This quantity is non-negative by our assumption in Theorem \ref{thm:main}, thus we get $W(t) \geq 0$ for all $t \geq 0$, which in turn implies
\begin{equation}
    \frac{d}{dt}m_{\P}^q(N_t) \geq 0
\end{equation}

\begin{remark}\label{rmk:strict_function}
If $q \neq 0$, the above shows $\big(e^{t/2}W\big)' > 0$, so that
$W(t) > 0$ for every $t > 0$, even when $W(0) = 0$.
\end{remark}

\section{Monotonicity along the weak flow}\label{sec:weak}
\subsection{Growth formulae}\label{sec:weak-growth}

We prove an analogue of \cite[Thm.~6.5]{Don26b} in which the term
$16\pi\int_{N_t}(\mu - J(\nu))$ is retained rather than discarded; this is the
input to the monotonicity of $m_{\P}^q$ established in
Section~\ref{sec:weak-mono}. The proof is a slight modification of the
argument given in \cite[\S6]{Don26b}. We adopt Dong's notation, adapted to
our setting ($\sigma = \P$).
Let $\tilde{g} = g + dz^2$ be the product metric on $M\times \mathbb{R}$.
Extend $\P$ constantly in the $z$-direction, so that
$\P(e_z,\cdot) = 0$. Symbols with a tilde will denote quantities
computed in $(M\times \mathbb{R}, \tilde{g})$ for the rest of the section. As
noted in Section~\ref{sec:prelim}, $\P$ is admissible in the sense of
\cite[\S4.1]{Don26b} with $\sigma_i \equiv \P$; no smoothing of
$\sigma$ is required, in contrast with the scalar case $\sigma = |h|g$ of
\cite{Don26b}. Let $u^{\epsilon_i}$ denote the solutions of the elliptic
regularization $(\ast)_{\epsilon_i}$ of \cite[\S4.1]{Don26b} on the
precompact domains $\Omega_{L_i} \to M$, where the subsequences
$\epsilon_i \to 0$ and $L_i \to \infty$ are supplied by the proof of
\cite[Thm.~4.9]{Don26b}. We also define
$U_i(x,z) := u^{\epsilon_i}(x) - \epsilon_i z$ and the level sets
$\tilde N_t^i := \{U_i = t\}$, which are smooth solutions of the
$\P$-IMCF in $\Omega_{L_i}\times\mathbb{R}$ for
$-\infty < t < \infty$. Moreover, with
$\tilde\nu_i := \tilde\nabla U_i/|\tilde\nabla U_i|$,
\[
    U_i \to U(x,z) = u(x) \ \text{ locally uniformly}, \qquad
    \tilde\nu_i \to \tilde\nu \ \text{ a.e.}, \qquad
    \tilde N^i_t \to \tilde N_t = N_t\times\mathbb{R}
\]
locally in $C^{1,\alpha}$ for a.e.\ $t \ge 0$, where $N_t = \partial\E_t$
is the weak solution of Section~\ref{sec:prelim}.
All tangential projections along $\tilde N^i_t$ are taken with respect to
$\tilde\nu_i$. We also fix, as in \cite[\S6]{Don26b}, a cutoff function
$\phi\in C^2_c(\mathbb{R})$ with $\phi \ge 0$,
$\operatorname{supp}\phi\subset[2,4]$ and $\int\phi\,dz = 1$, together with a
compact set $K(T)$ containing
$\tilde N^i_t\cap(M\times\operatorname{supp}\phi)$ for all $0\le t\le T$ and
all large $i$. At every non-jump time $\tilde\nu$ is orthogonal to $e_z$, so
that, writing $\beta_i := \langle\tilde\nu_i,e_z\rangle$, we have
$\sup_{\tilde N^i_t\cap(M\times\operatorname{supp}\phi)}|\beta_i| \to 0$. We
shall use the uniform estimates \cite[(32),\,(35)]{Don26b} without further
comment.

As in \cite[(30)]{Don26b}, for the smooth $\P$-IMCF $\tilde N^i_t$,
we get
\begin{align*}
    &\frac{d}{dt}\int_{\N_t^i}
      \phi(\tilde{H}_i - \P(\tilde{\nu}_i,\tilde{\nu}_i))^2
    \\
    &= \int_{\N_t^i}(\tilde{H}_i
      - \P(\tilde{\nu}_i, \tilde{\nu}_i))
      \tilde{\nabla}_{\tilde{\nu}_i}\phi
      - 2\left\langle\tilde{\nabla}^N\phi, \,
      \frac{\tilde{\nabla}^N(\tilde{H}_i
      -\P(\tilde{\nu}_i, \tilde{\nu}_i))}{\tilde{H}_i
      - \P(\tilde{\nu}_i, \tilde{\nu}_i)}\right\rangle
    \\
    &- 2\int_{\N_t^i}\phi\left(
      \frac{|\tilde{\nabla}^N(\tilde{H}_i
      - \P(\tilde{\nu}_i, \tilde{\nu}_i))|^2}{(\tilde{H}_i
      - \P(\tilde{\nu}_i, \tilde{\nu}_i))^2}
      + \bigg(\widetilde{\Ric}(\tilde{\nu}_i, \tilde{\nu}_i)
      + \underbrace{|\tilde{II}_i|^2}_{(\dagger)}\bigg)\right)
    \\
    &-\underbrace{2 \int_{\N_t^i}\phi
      (\tilde{\nabla}_{\tilde{\nu}_i}\P)(\tilde{\nu}_i,
      \tilde{\nu}_i)}_{(\star)}
      - 4\int_{\N_t^i}\phi
      \frac{\P\bigg(\tilde{\nu}_i, \,
      \tilde{\nabla}^N (\tilde{H}_i
      - \P(\tilde{\nu}_i, \tilde{\nu}_i))\bigg)}{\tilde{H}_i
      - \P(\tilde{\nu}_i, \tilde{\nu}_i)}
      + \int_{\N_t^i} \phi \tilde{H}_i (\tilde{H}_i
      - \P(\tilde{\nu}_i, \tilde{\nu}_i)),
\end{align*}
where $\widetilde{\Ric}$ and $\tilde{II}_i$ denote the Ricci curvature
of $\tilde g$ and the second fundamental form of $\tilde N^i_t$ in
$(M\times\mathbb{R},\tilde g)$.

In our proof, the main modification is the treatment of the $(\star)$ term.
In \cite[\S6]{Don26b} this term is estimated in absolute value, which
suffices there since the resulting energy term is discarded using 
$\mu \ge |J|_g$. We instead compute its limit exactly. For a
two-sided hypersurface with unit normal $\nu$, in any dimension, we have the
decomposition
\begin{equation}\label{eq:P_identity}
    (\nabla_{\nu} \P)(\nu, \nu) = (\dv_g \P)(\nu)
    - \dv_{N}(\P^{TN}(\nu, \cdot))
    - H\,\P(\nu, \nu)
    + \langle \P^{TN}, II\rangle .
\end{equation}
We now treat each term in turn.

\subsection*{Term 1.} $(\dv_{\tilde g} \P)(\tilde{\nu}_i)$. Since
$\P$ is smooth with $|\nabla \P| \leq C$ on $K(T)$ and
$\tilde{\nu}_i \to \tilde{\nu}$ a.e., by bounded convergence,
\begin{equation}\label{weak_part1}
    \int_{\N_t^i} \phi (\dv_{\tilde g} \P)(\tilde{\nu}_i)
    \longrightarrow \int_{\N_t}\phi (\dv_g \P)(\tilde{\nu}) .
\end{equation}
Note that this term corresponds to the $J(\nu)$ term we had in the strong
formulation.

\subsection*{Term 2.} $\dv_{\N}(\P^{T\N}
(\tilde{\nu}_i, \cdot))$. Here, we integrate by parts, using
$\tilde{\nabla}^N\phi = \phi'(e_z - \beta_i\tilde{\nu}_i)$ and
$\P(\cdot,e_z) = 0$:
\begin{align*}
    \int_{\N^i_t}\phi\,
    \dv_{\N}(\P^{T\N}(\tilde{\nu}_i, \cdot))
    &= -\int_{\N_t^i}\left\langle\tilde{\nabla}^N\phi,\,
      \P^{T\N}(\tilde{\nu}_i,\cdot)\right\rangle
    \\
    &= \int_{\N_t^i}\phi'\,\beta_i\,
      \P(\tilde{\nu}_i, \tilde{\nu}_i) \to 0,
\end{align*}
where $\beta_i = \langle \tilde{\nu}_i, e_z\rangle$ and since
$\beta_i \to 0$ uniformly on $K(T)$, while $\phi'$, $\P$ and the
areas $|\N^i_t\cap(M\times\operatorname{supp}\phi)|$ are uniformly
bounded.

\subsection*{Term 3.} $\tilde{H}_i\,\P(\tilde{\nu}_i, \tilde{\nu}_i)$.
As justified in \cite[\S6]{Don26b}, $\tilde{H}_i \tilde{\nu}_i
\stackrel{*}{\rightharpoonup} \tilde{H}\tilde{\nu}$; testing against the
fixed bounded vector field
$\phi\,\P(\tilde{\nu},\tilde{\nu})\,\tilde{\nu}$ and using
$\langle\tilde{\nu}_i,\tilde{\nu}\rangle \to 1$ uniformly, this in turn
implies
\begin{equation}
    \int_{\N_t^i}\phi\,\tilde{H}_i\,
    \P(\tilde{\nu}_i, \tilde{\nu}_i)
    \longrightarrow \int_{\N_t}\phi\,\tilde{H}\,
    \P(\tilde{\nu}, \tilde{\nu}) .
\end{equation}

\subsection*{Term 4.} $\langle \P^{T\N_i},
\tilde{II}_i\rangle$. We combine this term with $(\dagger)$,
\[
|\tilde{II}_i|^2
+ \bigl\langle \P^{T\N_i}, \tilde{II}_i\bigr\rangle
= \Bigl|\tilde{II}_i + \tfrac12\,\P^{T\N_i}\Bigr|^2
- \tfrac14\,\bigl|\P^{T\N_i}\bigr|^2
\quad\text{pointwise on }\tilde N^i_t.
\]
Since $|\P^{T\N_i}|^2$ is bounded and continuous in the
point and the normal, for a.e.\ $t$,
\[
\int_{\N_t^i}\phi\,|\P^{T\N_i}|^2
\longrightarrow
\int_{\N_t}\phi\,|\P^{T\N}|^2
\]
and hence
\[
\int_r^s\!\!\int_{\N^i_t}\phi\,|\P^{T\N_i}|^2\,dt
\longrightarrow
\int_r^s\!\!\int_{\N_t}\phi\,|\P^{T\N}|^2\,dt
\]
by bounded convergence. Recall from \cite[\S5]{HI01} and
\cite[(35)]{Don26b} that for a.e.\ $t$ the limit surface carries a weak
second fundamental form $\tilde{II}\in L^2(\tilde N_t)$. The completed
square carries a favorable sign and is lower semicontinuous for a.e.\
$t$ [cf.\ \cite[(5.17)--(5.18)]{HI01}]; hence, by Fatou's lemma,
\[
\int_r^s\!\!\int_{\tilde N_t}\phi\,
\Bigl|\tilde{II}+\tfrac12\,\mathbf P^{T\tilde N}\Bigr|^2\,dt
\;\le\;
\liminf_{i\to\infty}\int_r^s\!\!\int_{\tilde N^i_t}\phi\,
\Bigl|\tilde{II}_i+\tfrac12\,\mathbf P^{T\tilde N_i}\Bigr|^2\,dt .
\]

\medskip

On the cylinder $\N_t = N_t\times\mathbb{R}$ all geometric
quantities are invariant in the $z$-direction; moreover
$\tilde{II}(e_z,\cdot)=0$ and, since $\mathbf P(e_z,\cdot)=0$ and
$\tilde\nu\perp e_z$, the tensor $\mathbf P^{T\tilde N}$ has vanishing
$e_z$-components, so that pointwise
$|\tilde{II}+\tfrac12\mathbf P^{T\tilde N}|^2
=|II+\tfrac12\mathbf P^{TN}|^2$ and
$|\mathbf P^{T\tilde N}|^2=|\mathbf P^{TN}|^2$. The normalization
$\int\phi\,dz = 1$ therefore reduces every double integral above to an
integral over $N_t$. Following \cite{Don26b} verbatim for the remaining
terms, we obtain, for a.e.\ $r \ge 0$ and all $s > r$,
\begin{align*}
    \int_{N_s}(H - \P(\nu,\nu))^2
    &- \int_{N_r}(H - \P(\nu,\nu))^2
    \\
    &\leq -2\int_r^s \int_{N_t}\left(
      \frac{|\nabla^N (H - \P(\nu,\nu))|^2}{(H
      - \P(\nu,\nu))^2}
      + \mathrm{Ric}(\nu,\nu)\right)
    \\
    &\quad
    - 2\int_r^s \int_{N_t}
      \Bigl|II+\tfrac12\,\mathbf P^{TN}\Bigr|^2
    + \frac12\int_r^s \int_{N_t}\bigl|\mathbf P^{TN}\bigr|^2
    \\
    &\quad
    - 2\int_r^s \int_{N_t}\Bigl((\dv_g \P)(\nu)
      - H\,\P(\nu,\nu)\Bigr)
    \\
    &\quad
    - 4\int_r^s\int_{N_t}\frac{\P\big(\nu,\,\nabla^N(H
      - \P(\nu,\nu))\big)}{H - \P(\nu,\nu)}
      + \int_r^s \int_{N_t} H (H - \P(\nu,\nu)).
\end{align*}
Since $II,\mathbf P^{TN}\in L^2(N_t)$ for a.e.\ $t$, we may expand the
square pointwise a.e.,
\[
-2\Bigl|II+\tfrac12\,\mathbf P^{TN}\Bigr|^2
+\tfrac12\bigl|\mathbf P^{TN}\bigr|^2
=-2|II|^2-2\bigl\langle \P^{TN},II\bigr\rangle ,
\]
restoring the grouping
\[
-2\Bigl(\underbrace{\mathrm{Ric}(\nu,\nu) + |II|^2}_{(**)}\Bigr)
-2\Bigl((\dv_g \P)(\nu) - H\,\P(\nu,\nu)
+\underbrace{\langle \P^{TN}, II\rangle}_{(***)}\Bigr).
\]
Since $\dim N_t = 2$, we may now split $(***)$ as
$\langle\P^{TN},II\rangle
= \langle\mathring{\P}^{TN},\mathring{II}\rangle
+ \frac{1}{2}(\tr_{N_t}\P)H$, so that
\[
    -H\,\P(\nu,\nu) + \langle\P^{TN},II\rangle
    = \langle\mathring{\P}^{TN},\mathring{II}\rangle
    + \frac{1}{2}H\,\tr_g\P
    - \frac{3}{2}H\,\P(\nu,\nu),
\]
recovering the identity used in \cite[\S3]{Don26a}. Applying the Gauss
equation to $(**)$,
\begin{align*}
    2(\Ric(\nu,\nu) + |II|^2) &= R + |II|^2 + H^2 - 2K
    \\
    &= R + |\mathring{II}|^2 + \frac{3}{2}H^2 - 2K,
\end{align*}
where $K = K_{12} + \lambda_1\lambda_2$ is the weak sectional curvature of
$N_t$ as in \cite[Lem.~6.1]{Don26b}, we can write
\begin{align*}
    2(\Ric(\nu,\nu) + |II|^2)
    + 2\langle \mathring{\P}^{TN}, \mathring{II}\rangle
    &= R + |\mathring{II}|^2 + \frac{3}{2}H^2 - 2K
      + 2\langle \mathring{\P}^{TN}, \mathring{II}\rangle
    \\
    &= R + \frac{3}{2}H^2 - 2K
      + |\mathring{\P}^{TN} + \mathring{II}|^2
      - |\mathring{\P}^{TN}|^2 ,
\end{align*}
and similarly for the gradient terms. Along with the rest of the arguments
in the proof of \cite[Thm.~6.5]{Don26b} and the algebra of Proposition
\ref{prop:residue}, we have proved an analogue of that theorem with a
stronger lower bound.

\begin{theorem}\label{thm:growth}
Let $(M^3,g,k)$ be a complete, connected, asymptotically flat initial data
set without boundary with $\P \ge 0$, and let $\E_0$ be a
precompact open set with $C^2$-boundary. Then there exists a weak solution
of the $\P$-IMCF with initial condition $\E_0$ such that for
a.e.\ $r \ge 0$ and all $s > r$,
\begin{equation}\label{eq:growth}
\begin{split}
\int_{N_s}\big(H - \P(\nu,\nu)\big)^2
&- \int_{N_r}\big(H - \P(\nu,\nu)\big)^2 \\
\le\; &-\frac{1}{2}\int_r^s\!\!\int_{N_t}\big(H - \P(\nu,\nu)\big)^2
 \;+\; 4\pi\int_r^s \chi(N_t) \\
&- \int_r^s\!\!\int_{N_t} 16\pi\big(\mu - J(\nu)\big)
 \;-\; \int_r^s\!\!\int_{N_t}\big(H - \P(\nu,\nu)\big)\,
 \tr_{N_t}\P \\
&- \int_r^s\!\!\int_{N_t}\big|\mathring{II} + \mathring{\P}^{TN}\big|^2
 \;-\; 2\int_r^s\!\!\int_{N_t}\left|
 \frac{\nabla^N\big(H - \P(\nu,\nu)\big)}{H - \P(\nu,\nu)}
 + \P^{TN}(\nu,\cdot)\right|^2 .
\end{split}
\end{equation}
\end{theorem}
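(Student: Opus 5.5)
The plan is to take the weak solution constructed in \cite[Thm.~4.9]{Don26b} by elliptic regularization and carry the integrated inequality obtained just above the statement down to \eqref{eq:growth} by purely algebraic rearrangement. The only analytic limits needed are those already handled: Terms 1--4 and the lower semicontinuity of the completed square. The remaining terms (gradient terms, $\int H(H-\P(\nu,\nu))$, the cutoff terms) are treated verbatim as in \cite[\S6]{Don26b}. So first I will record that, for a.e.\ $r$ and all $s>r$, the displayed inequality holds with the grouping $(**)$, $(***)$ restored and $(***)$ split into $\langle\mathring{\P}^{TN},\mathring{II}\rangle+\tfrac12 H\tr_{N_t}\P$.

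Next I will insert the Gauss equation identity for $2(\Ric(\nu,\nu)+|II|^2)+2\langle\mathring{\P}^{TN},\mathring{II}\rangle$. The weak Gauss--Bonnet theorem $\int_{N_t}K\le 2\pi\chi(N_t)$ from \cite[Lem.~6.1]{Don26b} turns $\int 2K$ into $4\pi\chi(N_t)$; the inequality direction comes from the weak sectional curvature. For the gradient terms I will complete the square
\[
-2\frac{|\nabla^N(H-\P(\nu,\nu))|^2}{(H-\P(\nu,\nu))^2}-4\frac{\P(\nu,\nabla^N(H-\P(\nu,\nu)))}{H-\P(\nu,\nu)}=-2\Big|\tfrac{\nabla^N(H-\P(\nu,\nu))}{H-\P(\nu,\nu)}+\P^{TN}(\nu,\cdot)\Big|^2+2|\P^{TN}(\nu,\cdot)|^2,
\]
which is legitimate a.e.\ since $H-\P(\nu,\nu)>0$ a.e.\ by I) and the weighted gradient is in $L^2$ by \cite[(32)]{Don26b}.

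What remains is a pointwise zeroth-order expression in $R$, $H$, $\P$, $\dv_g\P$. I will first write $H^2$ terms as $\tfrac12(H-\P(\nu,\nu))^2$ plus remainders. The remainder is then, up to sign, exactly the integrand treated in Proposition~\ref{prop:residue}:
\[
R-\tfrac12\P(\nu,\nu)^2-H\P(\nu,\nu)-2|\P^{TN}(\nu,\cdot)|^2+2(\dv_g\P)(\nu)+H\tr_g\P-|\mathring{\P}^{TN}|^2.
\]
The algebra there converts it into $16\pi(\mu-J(\nu))+(H-\P(\nu,\nu))\tr_{N_t}\P$, using only the constraint equations and the identity $8\pi J=-\dv_g\P$. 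Collecting terms gives \eqref{eq:growth}. Crucially, the $\mu-J(\nu)$ term is kept, rather than bounded below by $\mu-|J|_g\ge0$ as in \cite{Don26b}. This is exactly why Term 1 had to be computed as a limit rather than estimated in absolute value.

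The main obstacle is bookkeeping: making sure the coefficient of $\int(H-\P(\nu,\nu))^2$ comes out as $-\tfrac12$ and that every $\P$-quadratic term cancels, consistently with \cite[\S3]{Don26a}. I would verify this by checking that in the smooth case the final inequality coincides, after multiplying by $-1/16\pi$ and using $\chi\le2$, with Proposition~\ref{prop:residue} applied to $C(t)=e^{t/2}\varphi(t)$. A secondary point is that every term whose sign was used (the completed squares, Gauss--Bonnet) enters with the favorable direction for the lower-semicontinuous limit, so that the weak inequality is an inequality and not an equality.
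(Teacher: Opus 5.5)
Your proposal is correct and follows the paper's proof: the elliptic-regularization limits (Terms 1--4, with $\langle \P^{T\tilde N_i},\tilde{II}_i\rangle$ absorbed into the lower-semicontinuous completed square), then the split $\langle\P^{TN},II\rangle=\langle\mathring{\P}^{TN},\mathring{II}\rangle+\tfrac12 H\tr_{N_t}\P$, the Gauss equation with weak Gauss--Bonnet, completion of the gradient square, and the algebra of Proposition~\ref{prop:residue} with $16\pi(\mu-J(\nu))$ retained. Your bookkeeping also checks out: the zeroth-order remainder is exactly minus the integrand of \eqref{ineq:Dong}, which yields the coefficient $-\tfrac12$ on $\int_{N_t}(H-\P(\nu,\nu))^2$.
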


When $\E_0$ is bounded by a connected component of the outermost past
apparent horizon, \eqref{eq:growth} extends to $r = 0$ by V).

\subsection{Monotonicity of $m^q_{\P}$}\label{sec:weak-mono}

Throughout this section $(N_t)_{t \ge 0}$ denotes the weak solution provided
by Theorem \ref{thm:growth}, with initial condition $\E_0$ bounded by
$\Sigma$, and we assume the hypotheses of Theorem \ref{thm:main}. It is
convenient to introduce
\begin{equation}\label{eq:eta_xi_def}
    \eta(t) := e^{-t/2}|N_t|^{1/2}, \qquad
    \xi(t) := \frac{e^{t/2}}{|N_t|}, \qquad
    V(t) := e^{t/2}W(t) = C(t) - 4\pi q^2 \xi(t),
\end{equation}
so that $|N_t| = e^{t}\eta(t)^2$, $\xi = e^{-t/2}\eta^{-2}$ and
$|N_t|^{-1/2} = \eta\, \xi$. In this notation the mass \eqref{eq:q_mass_def}
reads
\begin{equation}\label{eq:mass_eta_form}
    m_{\P}^q(N_t)
    = \frac{\eta(t)\,C(t)}{4\sqrt{\pi}}
    + q^2\sqrt{\pi}\,\eta(t)\,\xi(t).
\end{equation}

For a.e.\ $t \ge 0$ set
\begin{equation}\label{eq:deficit}
\begin{split}
    D(t) \;:=\; &\int_{N_t}\Big|\mathring{II} + \mathring{\P}^{TN}\Big|^2
    \;+\; 2\int_{N_t}\left|
      \frac{\nabla^N\big(H - \P(\nu,\nu)\big)}{H - \P(\nu,\nu)}
      + \P^{TN}(\nu,\cdot)\right|^2 \\
    &+\; 4\pi\big(2 - \chi(N_t)\big)
    \;+\; \int_{N_t}\big(H - \P(\nu,\nu)\big)\,\tr_{N_t}\P \\
    &+\; \left[\,16\pi\int_{N_t}\big(\mu - J(\nu)\big)
      \;-\; \frac{32\pi^2 q^2}{|N_t|}\,\right] .
\end{split}
\end{equation}
Each of the five terms of \eqref{eq:deficit} is nonnegative: the first two
are squares; the third because $N_t$ remains connected by the Connectedness
Lemma \cite[Lem.~2.8]{Don26a}, so that $\chi(N_t) \le 2$; the fourth because
$H - \P(\nu,\nu) \ge 0$ a.e.\ by \cite[Lem.~4.10]{Don26b} and
$\tr_{N_t}\P \ge 0$ by the $2$-convexity condition; and the last
because Lemmas \ref{lem:EM_pointwise} and \ref{lem:flux} give
\begin{equation}\label{eq:charge_chain}
    16\pi\int_{N_t}\big(\mu - J(\nu)\big)
    \;\ge\; 2\int_{N_t}\big(E_\nu^2 + B_\nu^2\big)
    \;\ge\; \frac{32\pi^2 q^2}{|N_t|},
\end{equation}
Here, since $N_t$ is homologous to
$\Sigma$, Lemma \ref{lem:flux} is applicable. In particular $D \ge 0$.

\begin{lemma}\label{lem:weak_star}
For all $0 \le r < s$,
\begin{equation}\label{eq:weak_star}
    C(s) - C(r) \;\ge\; 2\pi q^2\int_r^s \xi(t)\,dt
    \;+\; \frac{1}{16\pi}\int_r^s e^{t/2}\,D(t)\,dt .
\end{equation}
In particular $C(s) - C(r) \ge 2\pi q^2\int_r^s \xi(t)\,dt$, and if equality
holds here, then $D \equiv 0$ a.e.\ on
$(r,s)$.
\end{lemma}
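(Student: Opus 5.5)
The plan is to rewrite the weak growth formula \eqref{eq:growth} of Theorem~\ref{thm:growth} in terms of $\varphi$ and $D$, and then convert the resulting integral inequality for $\varphi$ into one for $C = e^{t/2}\varphi$.

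\emph{Step 1: regroup the right-hand side.} Set $f(t) := \int_{N_t}(H-\P(\nu,\nu))^2 = 16\pi(1-\varphi(t))$. On the right-hand side of \eqref{eq:growth} I write $4\pi\chi(N_t) = 8\pi - 4\pi(2-\chi(N_t))$. I also add and subtract $32\pi^2q^2/|N_t|$ inside the energy term. Comparing with \eqref{eq:deficit}, the inequality becomes, for a.e.\ $r\ge 0$ (including $r=0$ by V)) and all $s>r$,
\[
f(s)-f(r)\le \int_r^s\Big(-\tfrac12 f(t) + 8\pi - \tfrac{32\pi^2q^2}{|N_t|} - D(t)\Big)dt .
\]
In terms of $\varphi$, after dividing by $-16\pi$, this reads
\[
\varphi(s)-\varphi(r)\ge \int_r^s\Big(-\tfrac12\varphi + \tfrac{2\pi q^2}{|N_t|} + \tfrac{1}{16\pi}D\Big)dt .
\]
This is the integrated analogue of \eqref{ineq:varphi_dot}. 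The inequality $D\ge0$ from \eqref{eq:charge_chain} is needed only for the final assertion.

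\emph{Step 2: pass from $\varphi$ to $C$.} Since $\varphi$ need not be differentiable (there are jumps, and $\varphi$ is only known to be locally bounded), I will not differentiate. Instead I use a Grönwall-type integrating-factor argument in integral form. Let $g(t) := \tfrac{2\pi q^2}{|N_t|}+\tfrac{1}{16\pi}D(t)$. The inequality from Step 1 says that the function
\[
\psi(t) := \varphi(t) + \tfrac12\int_0^t\varphi - \int_0^t g
\]
is nondecreasing on a full-measure set of times. I then intend to show that $e^{t/2}\varphi(t) - \int_0^t e^{\tau/2}g(\tau)\,d\tau$ is nondecreasing, which is exactly \eqref{eq:weak_star}, since $e^{t/2}\cdot\tfrac{2\pi q^2}{|N_t|} = 2\pi q^2\xi$.

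To do this, I partition $[r,s]$ into small intervals $t_0<\dots<t_n$. On each piece I use $e^{t_{j+1}/2}\varphi(t_{j+1}) - e^{t_j/2}\varphi(t_j) = e^{t_{j+1}/2}(\varphi(t_{j+1})-\varphi(t_j)) + (e^{t_{j+1}/2}-e^{t_j/2})\varphi(t_j)$. I insert the Step 1 bound into the first summand and sum over $j$. The $-\tfrac12\varphi$ contributions cancel against the exponential increments in the limit of fine partitions, using local boundedness of $\varphi$ and $\int_{N_t}(H-\P(\nu,\nu))^2$ in $L^1_{loc}$, which comes from the weak flow estimates of \cite{Don26b}. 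The $g$ terms converge to $\int e^{t/2}g$ by monotone convergence, since $g\ge0$.

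Alternatively, the same result follows by viewing the Step 1 inequality as saying that the distributional derivative of $\varphi$ dominates an $L^1_{loc}$ function (so $\varphi$ is BV with a nonnegative singular part), and multiplying by the smooth positive factor $e^{t/2}$. The endpoint restriction "a.e.\ $r$" is removed by right-continuity from \cite{Don26b} together with V) at $r=0$.

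\emph{Step 3: equality case.} Each term in the bound is nonnegative: $\xi>0$, and $D\ge0$ by the discussion after \eqref{eq:deficit}. So $C(s)-C(r)\ge 2\pi q^2\int_r^s\xi$. If equality holds there, then $\int_r^s e^{t/2}D\,dt=0$, and since $D\ge0$ this forces $D=0$ a.e.\ on $(r,s)$.

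The main obstacle is Step 2: handling the weak, possibly discontinuous $\varphi$ rigorously. I would do this via the Riemann-sum argument above, relying on \cite{Don26b} for local boundedness of the Willmore-type integral along the weak flow.
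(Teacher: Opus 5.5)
Your proposal is correct and is essentially the paper's proof: the same regrouping of \eqref{eq:growth} against \eqref{eq:deficit} into an integrated inequality for $\varphi$, followed by the integrating factor $e^{t/2}$, which the paper carries out exactly as in your ``alternative'' (bounded variation of $\varphi$, $d\varphi \ge \bigl(-\tfrac12\varphi + 2\pi q^2|N_t|^{-1} + \tfrac{1}{16\pi}D\bigr)\,dt$ as measures, then $dC = e^{t/2}\,d\varphi + \tfrac12 e^{t/2}\varphi\,dt$). Two small cautions: first, in the Riemann-sum version the cross terms $\int_{t_j}^{t_{j+1}} e^{t/2}\bigl(\varphi(t_j)-\varphi(t)\bigr)\,dt$ are controlled by the mesh times the total variation of $\varphi$ (which your Step~1 supplies), not by local boundedness alone; second, right-continuity of $\varphi$ cannot simply be cited at jump times, since $N_t \to N_r^+$ rather than $N_r$ as $t \searrow r$ (the paper, too, only treats $r=0$ explicitly, using property V) of the preliminaries).
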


\begin{proof}
Writing $F(t) := \int_{N_t}\big(H - \P(\nu,\nu)\big)^2
= 16\pi\big(1 - \varphi(t)\big)$ and substituting \eqref{eq:deficit} into
\eqref{eq:growth}, we obtain, for a.e.\ $r \ge 0$ and all $s > r$,
\[
    F(s) - F(r) \;\le\; -\frac12\int_r^s F(t)\,dt + 8\pi(s-r)
    - 32\pi^2 q^2\int_r^s\frac{dt}{|N_t|}
    - \int_r^s D(t)\,dt ,
\]
which in terms of $\varphi$ reads
\begin{equation}\label{eq:weak_phi}
    \varphi(s) - \varphi(r) \;\ge\; -\frac12\int_r^s\varphi(t)\,dt
    + 2\pi q^2\int_r^s\frac{dt}{|N_t|}
    + \frac{1}{16\pi}\int_r^s D(t)\,dt .
\end{equation}
In particular $\varphi$ is of bounded variation on compact intervals, and
its distributional derivative satisfies
$d\varphi \ge \big(-\tfrac12\varphi + 2\pi q^2|N_t|^{-1}
+ \tfrac{1}{16\pi}D\big)\,dt$ as measures. Multiplying by the integrating
factor $e^{t/2}$,
\[
    dC = e^{t/2}\,d\varphi + \tfrac12 e^{t/2}\varphi\,dt
    \;\ge\; e^{t/2}\left(\frac{2\pi q^2}{|N_t|}
    + \frac{D(t)}{16\pi}\right)dt
    = \left(2\pi q^2 \xi(t) + \frac{e^{t/2}D(t)}{16\pi}\right)dt ,
\]
and integrating over $(r,s]$ gives \eqref{eq:weak_star}. Since
$\E_0 = \E_0^+ = \E_0'$, the inequality extends to $r = 0$. The
final assertion follows since $D \ge 0$ and $e^{t/2} > 0$.
\end{proof}

\begin{lemma}\label{lem:g_monotone}
The function $\xi$ is nonincreasing.
\end{lemma}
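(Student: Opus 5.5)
The claim is that $\xi(t) = e^{t/2}/|N_t|$ is nonincreasing. This follows directly from property II) of the weak flow, the exponential area growth $e^{-r}|N_r| \le e^{-s}|N_s|$ for $0 \le r < s$. Rewriting,
\[
\xi(t) = \frac{e^{t/2}}{|N_t|} = e^{-t/2}\cdot \frac{1}{e^{-t}|N_t|}.
\]
Here $e^{-t/2}$ is strictly decreasing and positive. The factor $1/(e^{-t}|N_t|)$ is positive and nonincreasing, because $e^{-t}|N_t|$ is positive and nondecreasing by II). The product of two positive nonincreasing functions is nonincreasing. In fact $\xi$ is strictly decreasing, since for $r<s$,
\[
\xi(s) \le e^{-s/2}\frac{1}{e^{-r}|N_r|} < e^{-r/2}\frac{1}{e^{-r}|N_r|} = \xi(r).
\]

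Two details need checking. First, $|N_t| > 0$ for all $t \ge 0$, so that $\xi$ is finite. This holds because $|N_t| \ge e^{t}|N_0| = e^t|\Sigma| > 0$.

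Second, II) must hold for all $0 \le r < s$, including $r=0$ and jump times, not just almost every time. As stated it does; it is \cite[(48)]{Don26b}. At $r = 0$ we use V), namely $\E_0 = \E_0^+$, so $|N_0| = |\Sigma|$ and no initial jump reduces the area. At a jump time, $|N_t|$ is taken to be the area of $\partial\E_t$ with $\E_t = \{u<t\}$, which matches the convention of II).

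There is no real obstacle. The only care needed is this bookkeeping on conventions at jump times and at $t=0$, so that the inequality holds pointwise for every $t$ rather than only almost everywhere. That pointwise statement is what Lemma~\ref{lem:weak_star} and the later integration of $\xi$ will need.
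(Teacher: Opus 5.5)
Your proof is correct and is essentially the paper's argument: both apply the exponential area growth II), $e^{-r}|N_r|\le e^{-s}|N_s|$, together with $e^{-s/2}\le e^{-r/2}$. The paper writes this as the chain $\xi(s)\le e^{r-s/2}/|N_r|\le e^{r/2}/|N_r|=\xi(r)$, and your remarks on positivity of $|N_t|$, strict decrease, and the conventions at $t=0$ and at jump times are accurate but add nothing beyond what II) already gives for all $0\le r<s$.
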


\begin{proof}
By \cite[(48)]{Don26b} we have $e^{-r}|N_r| \le e^{-s}|N_s|$ for all
$0 \le r < s$, that is $|N_s| \ge e^{s-r}|N_r|$. Hence
\[
    \xi(s) = \frac{e^{s/2}}{|N_s|}
    \;\le\; \frac{e^{s/2}}{e^{s-r}|N_r|}
    = \frac{e^{r - s/2}}{|N_r|}
    \;\le\; \frac{e^{r/2}}{|N_r|} = \xi(r),
\]
the last inequality because $r \le s$. Equivalently, $\eta$ is
non-decreasing and $\xi = e^{-t/2}\eta^{-2}$.
\end{proof}

\begin{proposition}\label{prop:weak_function}
$W(t) \ge 0$ for all $t \ge 0$.
\end{proposition}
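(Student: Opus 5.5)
Since $e^{t/2}>0$, the claim $W(t)\ge 0$ is equivalent to $V(t)=e^{t/2}W(t)=C(t)-4\pi q^2\xi(t)\ge 0$. I will show that $V$ is nondecreasing on $[0,\infty)$ in the weak sense, with no differentiability assumed. Together with $V(0)=W(0)=1-4\pi q^2/|\Sigma|\ge 0$, which is exactly the area hypothesis of Theorem \ref{thm:main}, this gives the result. The value $\varphi(0)=1$ is available because $\E_0$ is bounded by the past apparent horizon $\Sigma$, where $H=\P(\nu,\nu)$ by \eqref{eq:horizon}, and by V) the estimates extend to $r=0$.

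For the monotonicity of $V$, fix $0\le r<s$. By Lemma \ref{lem:weak_star},
\[
V(s)-V(r)=\big(C(s)-C(r)\big)-4\pi q^2\big(\xi(s)-\xi(r)\big)\;\ge\; 2\pi q^2\int_r^s\xi(t)\,dt+4\pi q^2\big(\xi(r)-\xi(s)\big).
\]
By Lemma \ref{lem:g_monotone}, $\xi$ is nonincreasing, so $\xi(r)-\xi(s)\ge 0$. Also $\xi>0$, so the integral term is nonnegative. Hence $V(s)\ge V(r)$. This is the integrated version of the smooth computation $(e^{t/2}W)'\ge 0$. In the smooth case the term $4\pi q^2(1+I/|N_t|)$ came from differentiating $-4\pi q^2\xi$, and here that contribution appears as the increment $\xi(r)-\xi(s)$. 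Dropping the first term even leaves room to spare.

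The one point needing care is that Lemma \ref{lem:weak_star} is stated for all $0\le r<s$, even though the growth formula \eqref{eq:growth} holds only for a.e.\ $r$. The lemma passed through a measure inequality for $dC$, so I will take it as given in that form. Pointwise values of $C$ at jump times are fixed by the convention $N_t=\partial\{u<t\}$, and both $C$ and $\xi$ are evaluated with the same convention. So the pointwise differences above are legitimate, and no extra regularity of $t\mapsto|N_t|$ is needed beyond the inequality in Lemma \ref{lem:g_monotone}.

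Finally, taking $r=0$ gives $V(t)\ge V(0)\ge 0$, and therefore $W(t)=e^{-t/2}V(t)\ge 0$ for all $t\ge 0$. If $q\neq 0$, the strict positivity of $2\pi q^2\int_0^t\xi$ gives $W(t)>0$ for $t>0$, matching Remark \ref{rmk:strict_function}.
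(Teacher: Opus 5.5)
Your proof is correct and is the paper's argument: you get monotonicity of $V=e^{t/2}W$ by combining Lemma~\ref{lem:weak_star} with Lemma~\ref{lem:g_monotone}, and then use $V(0)=1-4\pi q^2/|\Sigma|\ge 0$. Your strict-positivity observation for $q\neq 0$ also matches how the paper later uses this in the equality case.
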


\begin{proof}
By Lemma \ref{lem:weak_star} and Lemma \ref{lem:g_monotone},
\[
    V(s) - V(r) = \big[C(s) - C(r)\big]
    + 4\pi q^2\big[\xi(r) - \xi(s)\big] \;\ge\; 0
\]
for all $0 \le r < s$, so $V$ is nondecreasing. Since $\Sigma$ is a past
apparent horizon, $H = \P(\nu,\nu)$ on $\Sigma$, whence
$\varphi(0) = 1$ and
\[
    V(0) = W(0) = 1 - \frac{4\pi q^2}{|\Sigma|} \;\ge\; 0
\]
by the hypothesis $|\Sigma| \ge 4\pi q^2$ of Theorem \ref{thm:main}.
Therefore $V(t) \ge 0$, and hence $W(t) \ge 0$, for all $t \ge 0$.
\end{proof}

Note that Lemma \ref{lem:g_monotone} and Lemma \ref{lem:weak_star} hold
across jump times, so no separate argument is required at a jump.

\begin{theorem}\label{thm:weak_monotonicity}
Let $(M^3,g,k;E,B)$ be a complete, connected, asymptotically flat
Einstein--Maxwell initial data set without boundary satisfying
\eqref{eq:charged_DEC}, \eqref{eq:E_B_div_free} and $\P \ge 0$. Let
$\E_0$ be a precompact open set whose boundary $\Sigma$ is a connected
$C^{2,\alpha}$-smooth outermost past apparent horizon with
$|\Sigma| \ge 4\pi q^2$. Then there is a weak solution
$(N_t)_{0 \le t < \infty}$ of \eqref{eq:flow} with initial condition
$\E_0$ such that, for all $0 \le r < s$,
\begin{equation}\label{eq:weak_mass_mono}
    m_{\P}^q(N_s) - m_{\P}^q(N_r)
    \;\ge\; \big(\eta(s) - \eta(r)\big)\,
    \frac{e^{r/2}\,W(r)}{4\sqrt{\pi}} \;\ge\; 0 .
\end{equation}
In particular $m_{\P}^q(N_t)$ is monotone nondecreasing.
\end{theorem}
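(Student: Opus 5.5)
The plan is to rewrite $m^q_{\P}$ in terms of $\eta$, $\xi$ and $V$, and then use only three facts: the lower bound of Lemma~\ref{lem:weak_star}, the monotonicity of $\xi$ and $\eta$ (Lemma~\ref{lem:g_monotone}), and $V \ge 0$ (Proposition~\ref{prop:weak_function}). I take $(N_t)$ to be the weak solution of Theorem~\ref{thm:growth} with initial condition $\E_0$. All of these inputs hold for every $0\le r<s$, across jump times. So the proof is purely algebraic at the level of the integrated inequalities, with no differentiation in $t$. This avoids any regularity question for $\eta$ or $I(t)$ in the weak setting.

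\emph{Step 1 (rewriting the mass).} Substitute $C = V + 4\pi q^2\xi$ into \eqref{eq:mass_eta_form}. This gives
\[
m^q_{\P}(N_t) = \frac{\eta(t)V(t)}{4\sqrt\pi} + 2q^2\sqrt\pi\,\eta(t)\xi(t),
\qquad \eta\xi = |N_t|^{-1/2}.
\]
With the abbreviations $\eta_s=\eta(s)$, $V_s=V(s)$, $\xi_s=\xi(s)$, and similarly at $r$, I split
\[
\eta_sV_s - \eta_rV_r = (\eta_s-\eta_r)V_r + \eta_s(V_s - V_r).
\]
Since $V_r = e^{r/2}W(r)$, the first piece is exactly the claimed lower bound in \eqref{eq:weak_mass_mono}. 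It therefore suffices to show
\[
\frac{\eta_s(V_s-V_r)}{4\sqrt\pi} \;\ge\; 2q^2\sqrt\pi\,\big(\eta_r\xi_r - \eta_s\xi_s\big).
\]

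\emph{Step 2 (controlling the charge term).} By Lemma~\ref{lem:weak_star},
\[
V_s - V_r = [C(s)-C(r)] + 4\pi q^2(\xi_r-\xi_s) \;\ge\; 2\pi q^2\int_r^s\xi\,dt + 4\pi q^2(\xi_r - \xi_s).
\]
Multiplying by $\eta_s/(4\sqrt\pi)$ and dividing through by $2q^2\sqrt\pi$ (the case $q=0$ is trivial), the required inequality reduces to
\[
\frac{\eta_s}{2}\int_r^s\xi\,dt + \eta_s\xi_r - \eta_s\xi_s \;\ge\; \eta_r\xi_r - \eta_s\xi_s,
\quad\text{i.e.}\quad
\frac{\eta_s}{2}\int_r^s\xi\,dt + (\eta_s - \eta_r)\xi_r \ge 0.
\]
This holds because $\xi>0$ and $\eta_s\ge\eta_r$ by Lemma~\ref{lem:g_monotone}.

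\emph{Step 3 (sign of the bound).} Finally, $(\eta_s-\eta_r)\,e^{r/2}W(r)/(4\sqrt\pi)\ge 0$, again because $\eta$ is nondecreasing and $W(r)\ge0$ by Proposition~\ref{prop:weak_function}. Monotonicity of $m^q_{\P}(N_t)$ follows. Endpoint $r=0$ is covered because Lemma~\ref{lem:weak_star} and the area bound \cite[(48)]{Don26b} both extend to $r=0$ by V).

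The only delicate point is Step 2: the charge term $2q^2\sqrt\pi\,|N_t|^{-1/2}$ is decreasing, so it must be absorbed. The trick is to retain the $4\pi q^2(\xi_r-\xi_s)$ contribution inside $V_s-V_r$ rather than discarding it. Once that is done, the remaining inequality is immediate. As a consistency check, in the smooth case this argument reproduces $\tfrac{d}{dt}m^q_{\P}\ge V\,\eta'/(4\sqrt\pi)$, which agrees with \eqref{eq:mono}.
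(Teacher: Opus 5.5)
Your Step~1 is correct: $m^q_{\P} = \eta V/(4\sqrt\pi) + 2q^2\sqrt\pi\,\eta\xi$, and the splitting $\eta_sV_s-\eta_rV_r=(\eta_s-\eta_r)V_r+\eta_s(V_s-V_r)$ isolates the claimed bound. The gap is in Step~2, which contains a factor-of-two slip. Set $G:=\int_r^s\xi\,dt$. Multiplying $V_s-V_r\ge 2\pi q^2G+4\pi q^2(\xi_r-\xi_s)$ by $\eta_s/(4\sqrt\pi)$ gives $q^2\sqrt\pi\,\eta_s\bigl(\tfrac12G+\xi_r-\xi_s\bigr)$. You then divided this side by $q^2\sqrt\pi$ but the right side $2q^2\sqrt\pi(\eta_r\xi_r-\eta_s\xi_s)$ by $2q^2\sqrt\pi$. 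Done consistently, the inequality you actually need is
\[
\frac{\eta_s}{2}\,G+(\eta_s-\eta_r)\,\xi_r\;\ge\;\eta_r\xi_r-\eta_s\xi_s=|N_r|^{-1/2}-|N_s|^{-1/2}\;\ge\;0 .
\]
Its right-hand side is precisely the decrease of the charge term, and it is not zero. So $\xi>0$ and $\eta_s\ge\eta_r$ do not suffice, and retaining $4\pi q^2(\xi_r-\xi_s)$ does not make the rest ``immediate''. The inequality is in fact sharp. On Reissner--Nordstr\"om, $\eta$ is constant and $G=2(\xi_r-\xi_s)$, and both sides equal $(e^{-r/2}-e^{-s/2})|\Sigma|^{-1/2}$, so the right-hand side cannot be dropped.

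The missing ingredient is the one the paper's proof uses: a lower bound on $G$ coming from the monotonicity of $\eta$. Since $\eta$ is nondecreasing, $\xi(t)=e^{-t/2}\eta(t)^{-2}\ge e^{-t/2}\eta_s^{-2}$ on $[r,s]$, hence $G\ge 2(e^{-r/2}-e^{-s/2})/\eta_s^2$. Inserting this and $\eta\xi=e^{-t/2}/\eta$, the left side minus the right side is at least
\[
e^{-r/2}\Bigl(\frac{1}{\eta_s}-\frac{1}{\eta_r}+\frac{\eta_s-\eta_r}{\eta_r^2}\Bigr)=\frac{e^{-r/2}(\eta_s-\eta_r)^2}{\eta_r^2\,\eta_s}\;\ge\;0,
\]
which closes your Step~2. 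With this repair your decomposition is equivalent to the paper's. The paper reaches $(b-a)\bigl[C(r)/(4\sqrt\pi)-q^2\sqrt\pi\,e^{-r/2}/(ab)\bigr]$ and then, via $(ab)^{-1}\le a^{-2}$, discards exactly the same nonnegative remainder $q^2\sqrt\pi\,e^{-r/2}(b-a)^2/(a^2b)$.
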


\begin{proof}
Write $a := \eta(r)$ and $b := \eta(s)$, so that $b \ge a > 0$ by Lemma
\ref{lem:g_monotone}, and set $G := \int_r^s \xi(t)\,dt$. By
\eqref{eq:mass_eta_form} and $\eta\,\xi = |N_t|^{-1/2}$,
\[
    m_{\P}^q(N_s) - m_{\P}^q(N_r)
    = \frac{b\,C(s) - a\,C(r)}{4\sqrt{\pi}}
    + q^2\sqrt{\pi}\left(\frac{e^{-s/2}}{b} - \frac{e^{-r/2}}{a}\right).
\]
By Lemma \ref{lem:weak_star}, $C(s) \ge C(r) + 2\pi q^2 G$, and therefore
\[
    m_{\P}^q(N_s) - m_{\P}^q(N_r)
    \;\ge\; \frac{(b-a)\,C(r)}{4\sqrt{\pi}}
    + \frac{\sqrt{\pi}\,q^2\,b\,G}{2}
    + q^2\sqrt{\pi}\left(\frac{e^{-s/2}}{b} - \frac{e^{-r/2}}{a}\right).
\]
Since $\eta$ is nondecreasing, $\eta(t) \le b$ for $t \le s$, so that
$\xi(t) = e^{-t/2}\eta(t)^{-2} \ge e^{-t/2}b^{-2}$ and
\[
    G \;\ge\; \frac{1}{b^2}\int_r^s e^{-t/2}\,dt
    = \frac{2\big(e^{-r/2} - e^{-s/2}\big)}{b^2}.
\]
Substituting, the three charge terms collapse:
\[
    \frac{\sqrt{\pi}q^2 b G}{2}
    + q^2\sqrt{\pi}\left(\frac{e^{-s/2}}{b} - \frac{e^{-r/2}}{a}\right)
    \;\ge\; q^2\sqrt{\pi}\,e^{-r/2}\left(\frac{1}{b} - \frac{1}{a}\right)
    = -\,q^2\sqrt{\pi}\,e^{-r/2}\,\frac{b-a}{ab}.
\]
Hence, using $b \ge a$ once more in the form $(ab)^{-1} \le a^{-2}$,
\[
    m_{\P}^q(N_s) - m_{\P}^q(N_r)
    \;\ge\; (b-a)\left[\frac{C(r)}{4\sqrt{\pi}}
    - \frac{q^2\sqrt{\pi}\,e^{-r/2}}{ab}\right]
    \;\ge\; (b-a)\left[\frac{C(r)}{4\sqrt{\pi}}
    - \frac{q^2\sqrt{\pi}\,e^{-r/2}}{a^2}\right].
\]
Finally $a^2 = e^{-r}|N_r|$, so $e^{-r/2}a^{-2} = \xi(r)$ and the bracket
equals
\[
    \frac{1}{4\sqrt{\pi}}\Big(C(r) - 4\pi q^2 \xi(r)\Big)
    = \frac{V(r)}{4\sqrt{\pi}}
    = \frac{e^{r/2}W(r)}{4\sqrt{\pi}} ,
\]
which is nonnegative by Proposition \ref{prop:weak_function}. This proves
\eqref{eq:weak_mass_mono}.
\end{proof}

\begin{remark}
Letting $s \downarrow r$ along a smooth portion of the flow, where
$\eta'(t) = e^{-t/2} I(t)/\big(2|N_t|^{1/2}\big)$, the right-hand side of
\eqref{eq:weak_mass_mono} recovers the smooth bound
$\frac{d}{dt}m_{\P}^q(N_t) \ge
I(t)W(t)\big/\big(8\sqrt{\pi|N_t|}\big)$ of Section 3. In particular the
excess area growth $I$ is what the function $W$ has to pay for, and
\eqref{eq:weak_mass_mono} holds across jump times without further argument.
\end{remark}

\section{Proof of the Charged Penrose Inequality}\label{sec:proof}
\subsection*{Proof of main theorem.} 
Let $\E_0$ be the region enclosed by $\Sigma$, and choose the weak
solution of the $\P$-IMCF starting from $\E_0$ given by Theorem
\ref{thm:weak_monotonicity}. Since $\Sigma$ is outermost,
$\E_0 = \E_0'$ with respect to the weak solution field $\nu$, so the
Connectedness Lemma \cite[Lem.~2.8]{Don26a} and Theorem
\ref{thm:weak_monotonicity} apply. Because $\Sigma$ is a past apparent
horizon, $H = \P(\nu,\nu)$ on $\Sigma$, whence $\varphi(0) = 1$ and
\[
    m_{\P}^q(\Sigma)
    = \sqrt{\frac{|\Sigma|}{16\pi}} + q^2\sqrt{\frac{\pi}{|\Sigma|}} .
\]
Moreover Dong's mass
$m_{\P}(N_t) := \sqrt{|N_t|/16\pi}\,\varphi(t)
= \eta(t)C(t)/(4\sqrt{\pi})$ is a product of
nonnegative non-decreasing functions, by Lemmas \ref{lem:g_monotone},
\ref{lem:weak_star} and Proposition \ref{prop:weak_function}; together with
Lemma \ref{lem:g_monotone} this verifies the hypotheses of the asymptotic
comparison \cite[Lem.~7.2]{Don26b}, so that
$\lim_{t\to\infty} m_{\P}(N_t) \le m$. Finally
$|N_t| = e^{t}\eta(t)^2 \ge e^{t}|\Sigma| \to \infty$, so the charge term
$q^2\sqrt{\pi/|N_t|}$ vanishes in the limit and
$\lim_{t\to\infty} m_{\P}^q(N_t) = \lim_{t\to\infty} m_{\P}(N_t)$.
Combining,
\[
    \sqrt{\frac{|\Sigma|}{16\pi}} + q^2\sqrt{\frac{\pi}{|\Sigma|}}
    = m_{\P}^q(\Sigma)
    \le \lim_{t\to\infty} m_{\P}^q(N_t)
    = \lim_{t\to\infty} m_{\P}(N_t)
    \le m .
\]
We now consider the case of equality. Suppose first $q=0$. Then the equality
case of \cite[Thm.~1.2]{Don26a} gives $k\equiv 0$ and $(M,g)$ isometric to the
spatial Schwarzschild manifold; in particular $R\equiv 0$, so
$\mu = \tfrac{1}{16\pi}\big(R+(\operatorname{tr}k)^2-|k|^2\big) = 0$. Since
$\mu = \mu_m + \tfrac{1}{8\pi}\big(|E|^2+|B|^2\big)$ and $\mu_m \ge |J_m|_g \ge 0$
by \eqref{eq:charged_DEC}, both terms vanish; hence $E\equiv B\equiv 0$, and the
data is the canonical Schwarzschild slice. 

Assume from now on $q\neq 0$.
Note that by Lemma \ref{lem:weak_star}, $C(s) - C(r) > 0$ and by Lemma \ref{lem:g_monotone}, $\xi(r) - \xi(s) \geq 0$. This in turn yields
\[
V(s) - V(r)= \underbrace{[C(s) - C(r)]}_{>0} + \underbrace{4\pi q^2[\xi(r) - \xi(s)]}_{\geq 0} > 0
\]
Following the proof of Proposition \ref{prop:weak_function}, we get $W(t) > 0$ for all $t > 0$. Since $m^q_{\P}(N_s) = m^q_{\P}(N_r) = m$, we consider \eqref{eq:weak_mass_mono}:
\[
0 = m^q_{\P}(N_s) - m^q_{\P}(N_r) \geq \big(\eta(s) - \eta(r)\big)\,
   \underbrace{\frac{e^{r/2}\,W(r)}{4\sqrt{\pi}}}_{>0} \geq 0
\]
This implies $\eta(s) = \eta(r)$, hence $\eta(t) = const.$ and since $\eta(0) = |\Sigma|^{1/2}$, we obtain $|N_t| = e^t |\Sigma|$. Note that equality in the area-growth estimate II) forces $\P(\nu,\nu) = 0$ a.e. (the equality case of the $A(t)$-monotonicity, cf. \cite[proof of Thm. 1.2]{Don26a}).

Computing $C(s) - C(r)$ and the first term on the right hand side of \eqref{eq:weak_star} explicitly,
\begin{align*}
    &C(s) - C(r) = \frac{4\pi q^2}{|\Sigma|} (e^{-r/2} - e^{-s/2}) = 2\pi q^2 \int_r^s \xi(t)dt
\end{align*}
This yields
\[
 \frac{1}{16\pi}\int_r^s e^{t/2}\,D(t)\,dt  = 0
\]
so $D(t) = 0$ for a.e. $t$ and since $D(t)$ consists of non-negative terms, each vanishes, giving
\begin{enumerate}[(i)]
    \item $(H - \P(\nu,\nu))\tr_{N_t}\P = 0$ a.e. $t$
    \item $\int_{N_t} (\mu - J(\nu)) = \frac{2\pi q^2}{|N_t|}$
    \item $\chi(N_t) = 2$
\end{enumerate}
Since $H - \P(\nu,\nu) > 0$ a.e. on $N_t$ at regular times, $(i)$ implies $\tr_{N_t}\P =0$, and along with $\P(\nu,\nu) = 0$, this gives $k \equiv 0$. Applying $(ii)$ to the proof of Lemma \ref{lem:EM_pointwise}, 
\[
\mu - J(\nu) = \frac{1}{8\pi}(E_\nu^2 + B_\nu^2), \qquad \text{a.e. on }N_t
\]
and equality gives $J_m(\nu) = |J_m|_g$:
\[
    -\frac{1}{4\pi}\langle E^T \times B^T, \nu\rangle = \frac{1}{4\pi} |E\times B| \geq 0
\]
Moreover,
\[
\langle E^T \times B^T, \nu\rangle = |E^T||B^T| \geq 0
\]
Hence, $|E^T| = |B^T| = 0$ and as a result $\mu_m = |J_m|_g= 0$. By $\P(\nu,\nu) =0$, \eqref{eq:flow} is the classical inverse mean curvature
flow, and \eqref{eq:deficit} gives $\mathring{II} = 0$ and
$\nabla^N H = 0$, so that $H = H(t)$ is constant on each $N_t$ and
$II = \tfrac12 H\,g_{N_t}$. The argument of \cite[\S7.3]{Don26b} now applies
verbatim: each $N_t$ is smooth by elliptic regularity, and no jump can
occur, since the new portion of $N_t^+\setminus N_t$ would satisfy
$H = \P(\nu,\nu) = 0$ by \cite[(16)]{Don26a}, so that $N_t^+$ would
be a closed minimal surface, hence a generalized apparent horizon strictly
enclosing $\Sigma$ and disjoint from it by the strong maximum principle,
contradicting the assumption that $\Sigma$ is outermost. 
 
In contrast with \cite[\S7.3]{Don26b}, the scalar curvature does not vanish
here; instead,
\[
    R = 16\pi\mu = 2\big(|E|^2 + |B|^2\big) = 2\big(E_\nu^2 + B_\nu^2\big)
\]
is constant on each $N_t$. Since $H$ is constant on $N_t$, the evolution
equation \cite[Lem.~3.1]{Don26b} reduces to
$\partial_t H = -\big(\Ric(\nu,\nu) + |II|^2\big)/H$, whose
left-hand side depends only on $t$; as $|II|^2 = \tfrac12 H^2$ is likewise
constant on $N_t$, so is $\Ric(\nu,\nu)$. The Gauss equation
\[
    R^{N_t} = R - 2\Ric(\nu,\nu) + H^2 - |II|^2
    = R - 2\Ric(\nu,\nu) + \tfrac12 H^2
\]
therefore shows that $N_t$ has constant Gauss curvature, and since
$\chi(N_t) = 2$ by $(iii)$, each $N_t$ is a round sphere.

Since $II = \tfrac12 H\,g_{N_t}$, the induced metrics satisfy
$\partial_t\, g_{N_t} = \tfrac{2}{H}II = g_{N_t}$, so that
$g_{N_t} = e^{t}g_{\Sigma} = r^2 g_{S^2}$, where $r := r_+e^{t/2}$ and
$|\Sigma| = 4\pi r_+^2$. As the flow moves in the direction $\nu$ with speed
$1/H$ and $dr = \tfrac{r}{2}\,dt$, we obtain
\begin{equation}\label{eq:RN_form}
    g = \frac{1}{H^2}\,dt^2 + g_{N_t}
      = f(r)^{-1}dr^2 + r^2 g_{S^2},
    \qquad f(r) := \tfrac14 H^2 r^2 ,
\end{equation}
on $M\setminus\E_0$, with $f > 0$ for $r > r_+$ and $f(r_+) = 0$, since
$H$ vanishes on the minimal surface $\Sigma$.
 
The fields are normal with $E_\nu, B_\nu$ constant on each $N_t$,
so $\dv_g E = \dv_g B = 0$ gives $4\pi q_E = E_\nu\,|N_t| = 4\pi r^2E_\nu$
and likewise for $B$, whence
\[
    E_\nu = \frac{q_E}{r^2}, \qquad B_\nu = \frac{q_B}{r^2},
    \qquad R = 16\pi\mu = 2\big(|E|^2 + |B|^2\big) = \frac{2q^2}{r^4},
\]
using $\mu_m = 0$. On the other hand, for a metric of the form
\eqref{eq:RN_form} one computes
$R = \tfrac{2}{r^2}\big(1 - (rf)'\big)$, so that $(rf)' = 1 - q^2/r^2$ and
\[
    f(r) = 1 - \frac{2m_0}{r} + \frac{q^2}{r^2}
\]
for a constant $m_0$; expanding $g$ at infinity identifies $m_0$ with the
ADM mass $m$. Finally $f(r_+) = 0$ gives $r_+ = m + \sqrt{m^2-q^2}$, and
$m > |q|$, since otherwise $r_+ = m$ and
$\int_{r_+} f^{-1/2}\,dr = \int_{r_+} r\,(r-m)^{-1}dr$ diverges, so that
$\Sigma$ would lie at infinite distance and could not be a compact
boundary.
 
We conclude that $k \equiv 0$ and that $(M,g,E,B)$ is isometric to the
canonical slice $\{r \ge r_+\}$ of a sub-extremal Reissner--Nordstr\"om
spacetime with parameters $(m,q_E,q_B)$, the horizon being
$\Sigma = \{r = r_+\}$. Conversely, on such a slice the flow \eqref{eq:flow}
is by the round spheres $\{r = \mathrm{const}\}$ and
$m_{\P}^q(N_t) \equiv m$, so equality holds in \eqref{eq:main}. This
completes the proof of Theorem \ref{thm:main}. \qed
 
\begin{remark}
In particular equality forces $|\Sigma| > 4\pi q^2$: the boundary case
$|\Sigma| = 4\pi q^2$ of the area--charge hypothesis would give $r_+ = |q|$
and hence $m = |q|$, which was excluded above.
\end{remark}

\bibliographystyle{amsalpha}
\bibliography{references}

\end{document}